\documentclass[11pt]{amsart}
\usepackage{amsmath}
\usepackage{bbm}
\usepackage{mathrsfs}
\parskip4pt plus2pt minus2pt

\usepackage[english]{babel}
\usepackage[utf8x]{inputenc}
\usepackage[T1]{fontenc}
\usepackage{wrapfig}

\usepackage{ dsfont }

\usepackage[usenames,dvipsnames]{pstricks}
\usepackage{epsfig}
\usepackage{pst-grad} 
\usepackage{pst-plot} 

\usepackage[top=3cm,bottom=2cm,left=3cm,right=3cm,marginparwidth=1.75cm]{geometry}

\usepackage{amsmath,amssymb}
\usepackage{graphicx}
\usepackage[colorinlistoftodos]{todonotes}
\usepackage[colorlinks=true, allcolors=blue]{hyperref}
\usepackage[capitalise]{cleveref}

\usepackage{cite}
\usepackage{enumitem} 
\usepackage[all,cmtip]{xy}
\usepackage{tikz-cd}
\usepackage{lmodern}
\usepackage{pgfplots}
\usetikzlibrary{intersections}
\usepackage{relsize}

\usepackage{youngtab}


\newtheorem{theorem}{Theorem}
\newtheorem{proposition}[theorem]{Proposition}

\theoremstyle{remark} 
\newtheorem{remark}[theorem]{Remark}
\newtheorem{example}[]{Example}




\newcommand{\R}{\mathbb{R}}
\newcommand{\G}{\mathbb{G}}

\newcommand{\C}{\mathbb{C}}
\newcommand{\N}{\mathbb{N}}
\newcommand{\Z}{\mathbb{Z}}

\newcommand{\RP}{\mathbb{R}\mathrm{P}}
\newcommand{\CP}{\mathbb{C}\mathrm{P}}

\newcommand{\EE}{\mathbb{E}}
\newcommand{\PP}{\mathbb{P}}

\DeclareMathOperator{\vol}{vol}


\newcommand{\be}{\begin{equation}}
\newcommand{\ee}{\end{equation}}

\numberwithin{equation}{section}


\title{What is... Random Algebraic geometry?}

\author{Antonio Lerario}

\begin{document}

\begin{abstract}We survey some ideas from the subject of Random Algebraic Geometry, a field that introduces a probabilistic perspective on classical topics in real algebraic geometry. This offers a modern approach to classical problems, such as Hilbert's Sixteenth Problem.
\end{abstract}
\maketitle
\section*{Introduction}

Inspired by various conversations with friends and collaborators, I decided to write these introductory notes on the emerging field of \emph{Random Algebraic Geometry}. They reflect my personal perspective on the subject and are not intended to be exhaustive. For example, my ``proofs'' are merely sketches—they can be made rigorous with some additional effort on the reader's part. Rather than focusing on strict rigor, I aim to highlight the main underlying ideas and how they are broadly connected. As a result, this is a non-technical paper, and I hope you will enjoy reading it. 

The paper is structured as follows.

In \cref{sec:genericvsrandom}, I examine the measure-theoretic aspects of the classical algebraic geometry notion of ``generic'' and motivate the introduction of randomness in real algebraic geometry. This includes a discussion of basic probabilistic concepts and natural methods for endowing parameter spaces—specifically Grassmannians and spaces of polynomials—with probability distributions.

In \cref{sec:degreevsvolume}, I compare the notions of degree and volume, demonstrating how they translate into each other when adopting a probabilistic viewpoint via integral geometry. As an application, I sketch proofs of Bernstein–Khovanskii–Kouchnirenko's Theorem on the number of solutions of polynomial systems with specified monomial supports, and of Edelman–Kostlan–Shub–Smale's Theorem on the expected number of real zeros of a random polynomial system. I also discuss applications of the integral geometry approach to the so-called probabilistic Schubert Calculus, introduced by Bürgisser and myself.

Finally, in \cref{sec:topology}, I discuss a random version of Hilbert's Sixteenth Problem concerning the topology of real algebraic hypersurfaces. Here, I sketch the proof of Gayet–Welschinger's Theorem on the expected Betti numbers of random real algebraic projective hypersurfaces and present a low-degree approximation theorem proved by Diatta and myself.

\section{Generic and random}\label{sec:genericvsrandom}
\subsection{Complex and real discriminants}\label{sec:disc}We start these notes with some topological considerations on discriminants.  Suppose we are given a smooth manifold $P$  and a closed subset $\Sigma\subset P$ which is a submanifold--complex, i.e. a set that admits a nice stratification\footnote{In all the cases we will be interested in, $P$ will be a semialgebraic set and $\Sigma$ a semialgebraic subset. Still, some of the ideas we discuss in this section apply to more general situations, in which cases the notion of submanifold--complex that I have in mind is from \cite[Chapter 3, Exercise 15]{Hirsch}.} into finitely many smooth submanifolds, called strata. The dimension of $\Sigma$ is the maximum of the dimensions of its strata. For us $P$ will play the role of a ``parameter space'' and $\Sigma$ will be a ``discriminant'', meaning that it consists of the set of parameters which fail to have some property. 

\begin{example}[Polynomials of degree two]\label{ex:two}Take as a parameter space the set of real homogeneous polynomials of degree two in two variables:
\be\label{eq:polyBW} P=\left\{a_{20}x_0^2+a_{11}\sqrt{2}x_0x_1+a_{02}x_1^2\,\big|\, a_{20}, a_{11}, a_{02}\in \R\right\}.\ee
(There is a geometric reason for multiplying the coefficient of $x_0x_1$ by $\sqrt{2}$, this will become clear soon, see  \cref{sec:proba} below.)
To every element of $P$ we can associate its zeroes on $\RP^1$: the polynomial can have two projective zeroes, one zero (with multiplicity two), no zeroes, or it can vanish on the whole $\RP^1$ (in the case of the zero polynomial). A natural property which we can consider for the elements of 
$P$ is therefore ``having regular zeroes'', and the discriminant in this case will be the set of polynomials which fail to have this property\footnote{For us ``having no zeroes'' is a subcase of ``having regular zeroes'', by the red herring principle.}, i.e. polynomials with multiple (or infinitely many) zeroes on $\RP^1$:
$$\Sigma=\left\{a_{20}x_0^2+a_{11}\sqrt{2}x_0x_1+a_{02}x_1^2\,\big|\,2a_{20}a_{02}-a_{11}^2=0\right\}.$$
This discrimiant is a submanifold--complex of $P\simeq \R^3$, and it can be stratified into the two smooth strata $\{2a_{20}a_{02}-a_{11}^2=0\}\backslash \{0\}$, of dimension two, and the single point $\{0\}$. In particular $\Sigma$ is a hypersurface -- in the sense of real geometry. This hypersurface separates the whole space $P$ into three connected open sets, which we call \emph{chambers}: 
\be\label{eq:chambers} P\setminus \Sigma=\left\{2a_{20}a_{02}<a_{11}^2\right\}\sqcup\left\{2a_{20}a_{02}>a_{11}^2, a_{20}>0\right\}\sqcup\left\{2a_{20}a_{02}>a_{11}^2, a_{20}<0\right\}.\ee
The discriminant here acts as a ``wall'': we cannot go from one chamber to another one without crossing it, see \cref{fig:discquad}.

\begin{figure}\includegraphics{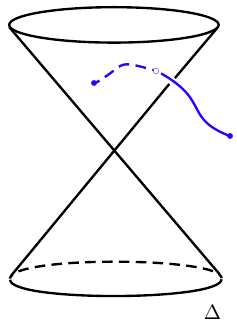}\caption{The discriminant in the space $P\simeq\R^3$ of real homogeneous polynomials of degree two is a quadric surface given by the equation $2a_{20}a_{02}-a_{11}^2=0$. Inside the cone we have $2a_{20}a_{02}>a_{11}^2$ and outside the cone $2a_{20}a_{02}<a_{11}^2$. Along a continuous path (the blue curve in the picture) between two polynomials, one inside the cone and the other outside the cone, we must hit $\Sigma$. }  \label{fig:discquad}\end{figure}
A similar construction can be made by taking as a parameter space the set of complex homogeneous polynomials of degree two (this space is just $\C^3$, the space of coefficients). We can still consider the property of ``having regular zeroes'', and the discriminant in this case consists of the set of complex polynomials with multiple or infinitely many zeroes on $\CP^1$:
$$ \Sigma_\C=\left\{a_{20}z_0^2+a_{11}\sqrt{2}z_0x_1+a_{02}z_1^2\,\big|\, 2a_{20}a_{02}-a_{11}^2=0\right\},$$
This discriminant is  a submanifold--complex of the space of complex polynomials, and it is made of the two smooth strata $\{2a_{20}a_{02}-a_{11}^2=0\}\backslash \{0\}$, of real dimension four, and the single point $\{0\}$. Clearly $\Sigma_\C\subset \C^3$ is a hypersurface in the complex sense (i.e. it is a complex algebraic subset of complex codimension one), but it is not a hypersurface in the real sense.
The main difference with the real case is that now the complement of $\Sigma_\C$ is connected, since its \emph{real} codimension is two.  In the space of complex polynomials we can continuously move between two polynomials with regular zeroes without hitting the discriminant.
\end{example}
Going back to the general picture, we see that what makes the difference between the real and the complex case is the codimension of the discriminant. It is a general fact that if it has codimension one it can\footnote{When $P=S^n$ and $\Sigma$ is a real algebraic set of codimension one, it \emph{must} separate. In fact, applying Alexander duality for the sphere, and working with $\Z_2$--coefficients, we get
$$ \widetilde{H}_0(S^n\setminus \Sigma)\simeq \widetilde{H}^{n-1}(\Sigma).$$
Since $\Sigma$ is a real algebraic set, by \cite[Proposition 11.3.1]{BCR:98}, it has a fundamental class over $\Z_2$ and $b_{n-1}(\Sigma)\neq 0$. In particular this implies that $b_0(S^n\setminus \Sigma)=1+\tilde{b}_0(S^n\setminus \Sigma)=1+b_{n-1}(\Sigma)>1.$
}
 separate $P$ into several connected open sets; if it has codimension at least two it cannot separate it. This is a purely topological fact, that is worth being recorded. 
\begin{proposition}\label{propo:codim}Let $P$ be a smooth manifold and $\Sigma\subset P$ be a closed submanifold--complex of codimension at least two. Then the inclusion $P\setminus \Sigma\hookrightarrow P$ induces an isomorphim on the zero--th homology.
\end{proposition}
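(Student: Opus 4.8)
The plan is to reduce the statement to the well-known fact that removing a submanifold of codimension $\geq 2$ from a manifold does not disconnect it, and then to handle the singularities of $\Sigma$ by an induction on the strata. Since we only care about $\widetilde H_0$, it suffices to show that $P\setminus\Sigma$ is connected whenever $P$ is connected (the general case follows by treating each component of $P$ separately), and that any two points of $P$ lying in the same component can already be joined inside $P\setminus\Sigma$. Equivalently, I want to show that a generic path between two points of $P\setminus\Sigma$ can be perturbed off $\Sigma$.

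First I would set up the local model. Near a point of the top-dimensional stratum $\Sigma_{\max}$, the pair $(P,\Sigma)$ looks like $(\R^n, \R^{n-k})$ with $k\geq 2$, and it is classical that $\R^n\setminus\R^{n-k}$ is connected for $k\geq 2$: given a path hitting $\R^{n-k}$, one pushes it off in one of the $\geq 2$ normal directions. Globally, take $p,q\in P\setminus\Sigma$ and a path $\gamma\colon[0,1]\to P$ from $p$ to $q$. By a transversality/general position argument — approximating $\gamma$ by a smooth map and using that $\Sigma$ is a finite union of smooth submanifolds each of codimension $\geq 2$ — I may assume $\gamma$ meets only the top stratum $\Sigma_{\max}$ and does so transversally; but a $1$-dimensional source mapping transversally into a codimension-$\geq 2$ submanifold misses it entirely. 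Hence $\gamma$ can be chosen to avoid $\Sigma$, which gives surjectivity of $\widetilde H_0(P\setminus\Sigma)\to\widetilde H_0(P)$; injectivity is automatic since $P\setminus\Sigma\hookrightarrow P$ and two points nonseparated in $P$ that lie in $P\setminus\Sigma$ were already shown to be joinable there.

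To make the transversality step legitimate despite $\Sigma$ being only a submanifold--complex, I would argue stratum by stratum, in order of increasing dimension. Write $\Sigma=\bigsqcup_j S_j$ with each $S_j$ a smooth submanifold of codimension $\geq 2$ in $P$, and note $\dim S_j \leq n-2$ for all $j$. Using that the smooth maps from $[0,1]$ to $P$ transverse to a fixed submanifold are dense (Thom's transversality theorem), and that transversality to $S_j$ for a $1$-manifold source forces disjointness from $S_j$, I can perturb $\gamma$ successively to be disjoint from each of the finitely many strata — taking care that once $\gamma$ avoids the closed set $\overline{S_{j_1}}\cup\cdots\cup\overline{S_{j_m}}$, subsequent small perturbations keep it away from that closed set. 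After finitely many steps $\gamma$ avoids all of $\Sigma$.

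The main obstacle is exactly this bookkeeping with the stratification: transversality theorems are cleanest for honest submanifolds, and here $\Sigma$ is closed but its strata need not be, so one must perturb in the right order (lower-dimensional strata, whose closures may contain higher ones, handled appropriately) and keep the perturbations small enough not to reintroduce intersections with strata already cleared. An alternative that sidesteps some of this, in the semialgebraic setting the paper cares about, is to invoke Alexander duality as in the footnote: $\widetilde H_0(P\setminus\Sigma;\Z_2)\cong \widetilde H^{\,c}_{?}(\Sigma;\Z_2)$-type arguments show the reduced $0$-homology is governed by the top cohomology of $\Sigma$ in the ambient dimension, which vanishes when $\mathrm{codim}\,\Sigma\geq 2$; but since \cref{propo:codim} is stated for general smooth $P$, I would present the transversality argument as the main line and mention duality only as a remark.
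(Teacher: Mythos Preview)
Your transversality argument is correct, and the bookkeeping can in fact be streamlined: transversality of a map $[0,1]\to P$ (rel endpoints) to a fixed smooth submanifold is a residual condition in $C^\infty$, and a finite intersection of residual sets is residual, so a single perturbation makes $\gamma$ transverse to---hence disjoint from---every stratum simultaneously, with no need to iterate.

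The paper, however, takes precisely the route you relegate to a closing aside. It applies Poincar\'e--Alexander--Lefschetz duality with $\Z_2$ coefficients (\cite[Theorem 3.44]{Hatcher}) to obtain $H_k(P,P\setminus\Sigma)\cong H^{n-k}(\Sigma)$; since $\dim\Sigma\leq n-2$, the groups $H^{n}(\Sigma)$ and $H^{n-1}(\Sigma)$ vanish, whence $H_0(P,P\setminus\Sigma)=H_1(P,P\setminus\Sigma)=0$, and the isomorphism on $H_0$ drops out of the long exact sequence of the pair. Note that this duality argument is \emph{not} confined to the semialgebraic setting as your last paragraph suggests; the paper runs it for arbitrary smooth $P$, with $\Z_2$ coefficients sidestepping orientability. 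Your approach is more elementary and geometrically vivid; the paper's is shorter, avoids any stratum-by-stratum analysis, and makes the role of $\mathrm{codim}(\Sigma)$ visible directly at the level of (co)homology.
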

\begin{proof}This is an easy consequence of Alexander duality. In fact, denoting by $n=\dim(P)$, by \cite[Theorem 3.44]{Hatcher}, working with $\Z_2$ coefficients\footnote{This ensures that $P$ is $\Z_2$--orientable.}, for every $k\in \mathbb{Z}$:
$$H_{k}(P, P\setminus \Sigma)\simeq H^{n-k}(\Sigma).$$
Since $\dim(\Sigma)<n-1$, then $H_{n}(\Sigma)= H_{n-1}(\Sigma)=0$ and consequently $H_1(P, P\setminus \Sigma)=H_0(P, P\setminus \Sigma)=0$. Plugging this into the long homology exact sequence of the pair $(P, P\setminus \Sigma)$, it gives $H_0(P\setminus \Sigma)\simeq H_0(P)$.
\end{proof}

The typical situation that one encounters in algebraic geometry is the study of the properties of a family of objects. The properties we will be interested in will be mostly topological (see  \cref{ex:hypersurfaces}).  In this context, I will now formulate a theorem which can be applied in many interesting situations and that captures the main features of the problems we will deal with. I will say that a map $f:X\to Y$ is algebraic if both $X$ and $Y$ are real (respectively complex) algebraic varieties and $f$ is real (respectively complex) algebraic.
\begin{theorem}\label{thm:disc}Let $f:X\to P$ be an algebraic map between smooth and compact varieties, real or complex. There exists a submanifold--complex $\Sigma\subset P$, of real codimension at least one in the real case and of real codimension at least two in the complex case, such that:
$$ f|_{X\setminus f^{-1}(\Sigma)}:X\setminus f^{-1}(\Sigma)\to P\setminus \Sigma$$
 is a topological fibration. \end{theorem}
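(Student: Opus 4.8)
The plan is to take $\Sigma$ to be (essentially) the set of critical values of $f$ and then to invoke Ehresmann's fibration theorem. Concretely, consider the critical locus
$$ C \;=\; \{x\in X \;:\; df_x\colon T_xX\to T_{f(x)}P \text{ is not surjective}\}. $$
Since the rank--drop condition is closed and algebraic, $C$ is a closed algebraic subvariety of $X$ (a complex subvariety in the complex case); since $X$ is compact, $f$ is proper, so $f(C)$ is compact, hence a closed $($semi$)$algebraic subset of $P$. Set $\Sigma := f(C)$; a finite semialgebraic stratification exhibits $\Sigma$ as a submanifold--complex, $f^{-1}(\Sigma)$ is again $($semi$)$algebraic, and by construction $X\setminus f^{-1}(\Sigma)$ contains no critical point of $f$. (If $f$ is not dominant, i.e. $\dim\overline{f(X)}<\dim P$, then $C=X$ and $\Sigma=\overline{f(X)}$, and the conclusion below holds vacuously because $X\setminus f^{-1}(\Sigma)=\emptyset$.)

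The codimension bound is the only place where the real and complex cases genuinely differ, and it is where one uses nontrivial input rather than bookkeeping. In the real case $\Sigma$ is precisely the set of critical values of $f$, so Sard's theorem gives that it has measure zero in $P$; being semialgebraic, a set of measure zero has dimension $<\dim P$, i.e. real codimension at least one — here one leans on the semialgebraic form of Sard's theorem to upgrade ``measure zero'' to ``lower--dimensional''. In the complex case $\Sigma=f(C)$ is constructible by Chevalley's theorem, and generic smoothness (valid in characteristic zero) provides a Zariski--dense open subset of $P$ over which $f$ is a submersion; hence $\Sigma$ sits inside a proper complex subvariety of $P$, which has complex codimension at least one, i.e. real codimension at least two.

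It then remains to conclude with Ehresmann's theorem. On $X\setminus f^{-1}(\Sigma)=f^{-1}(P\setminus\Sigma)$ the map $f$ is a submersion onto $P\setminus\Sigma$, since no critical point of $f$ survives in the source, and it is still proper, being the restriction of a proper map to the preimage of an open set; a proper submersion between smooth manifolds is a locally trivial smooth fiber bundle, hence in particular a topological fibration, which is exactly the assertion. The compactness of $X$ is essential here, precisely because it is what makes the restricted submersion proper. I expect the delicate point not to be this last step, which is ``only'' Ehresmann, but the codimension estimate — especially the complex codimension--two statement, which rests on Chevalley's theorem and generic smoothness rather than on any explicit computation. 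Note finally that, since Ehresmann's theorem is local over the base, $P\setminus\Sigma$ need not be connected and the fiber of the fibration is only locally constant over it.
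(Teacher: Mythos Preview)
Your proof is correct and follows essentially the same route as the paper: define $\Sigma$ as the set of critical values of $f$, use Sard's theorem (and the semialgebraic/constructible structure of the image) to get the codimension bound, and finish with Ehresmann's theorem applied to the proper submersion $f^{-1}(P\setminus\Sigma)\to P\setminus\Sigma$. Your treatment is in fact slightly more careful than the paper's sketch---you handle the non-dominant case explicitly, invoke Chevalley and generic smoothness by name in the complex case, and spell out why properness survives the restriction---but the architecture is identical.
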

 \begin{proof} First, both in the real and the complex case, the set $R$ of regular values of $f$ is dense in $P$, by Sard's Lemma\footnote{See \cite[Theorem 9.6.2]{BCR:98} for a semialgebraic version of the statement.}. In the real algebraic case this set is semialgebraic, and in the complex case it is constructible (in both cases it is the algebraic image of an algebraic subset). Let $\Sigma$ be the complement of $R$. Then $\Sigma$ is semialgebraic (constructible in the complex case) and closed, since it is the continuous image of a compact set (the set of points where the rank of $f$ is not maximal, a closed set in the compact set $X$). In the real case $P\setminus \Sigma$ is open dense and therefore $\dim(\Sigma)<\dim(P)$. In the complex case $\Sigma$ is constructible, in particular it can be stratified into constructible pieces, which are locally closed complex algebraic subsets; since $P\setminus \Sigma$ is open dense, no such piece can have full complex dimension, and in particular the complex codimension of $\Sigma$ is at least one, and its real codimension at least two. The fact that $f|_{X\setminus f^{-1}(\Sigma)}$ is a topological fibration follows now from Ehresmann's Theorem \cite{Ehresmann}. 
 \end{proof}
 Let $P$ be a smooth manifold. We say that a property is ``generic'' if it is valid for all the elements of $P$ except possibly for those elements belonging to a  subset of zero measure. This is a measure--theoretic translation of the corresponding notion from complex algebraic geometry. For instance, in the hypothesis of \cref{thm:disc}, one can say that \emph{the generic fiber of $f$ is smooth}. Moreover, in the complex case, combining \cref{thm:disc} and  \cref{propo:codim}, we see that something stronger is true: there exists a generic topological type for the fibers of $f$. This will in general be false in the real case.

 \begin{example}[Projective hypersurfaces of degree $d$]\label{ex:hypersurfaces} Let $P$ be the projectivization of the space of polynomials of degree $d$ in $n+1$ variables and $Y$ be the $n$--dimensional projective space. Let us not make a distinction between the real and the complex case, for now. We define $X$ to be the ``universal hypersurface'':
 $$X=\left\{(y,p)\in Y\times P\,\big|\, p(y)=0\right\}.$$
 and we introduce the map $f:X\to P$ as the projection onto the second factor. Notice that for a given $p\in P$, the fiber $f^{-1}(p)$ is homeomorphic to the zero set of $p$. We are in the position of applying  \cref{thm:disc}: there exists a closed submanifold--complex $\Sigma\subset P$ such that $f|_{X\setminus f^{-1}(\Sigma)}:X\setminus f^{-1}(\Sigma)\to P\setminus \Sigma$ is a topological fibration. This in particular means that the fibers of $f$ over each connected component of $P\setminus \Sigma$ are all homemorphic. 
 
 In the complex case, the real codimension of $\Sigma$ is two and, by  \cref{propo:codim}, $P\setminus \Sigma$ is connected. There is a generic topological type for the fibers of $f$, i.e. there is a generic topological type of hypersurfaces of degree $d$ in $\CP^n.$
 
In the real case the discriminant partitions the space of real polynomials into many chambers, which are called \emph{rigid isotopy classes}. In this context, the first part of Hilbert's Sixteenth Problem \cite{H16trans, Wilson} asks for the study of the maximal number and the possible arrangement of the components of a nonsingular real algebraic
hypersurface of degree $d$ in $\RP^n$. We see that this is essentially the problem of understanding the geometry of the rigid isotopy classes. In some sense it is a completely out of reach problem, if approached in a deterministic way. Already in the case $d=2$, Kharlamov and Orevkov \cite{OreKha} have proved that there exists $c>0$ such that the number of rigid isotopy classes (i.e. the number of connected components of $P\setminus \Sigma$) of real plane curves of degree $d$ is bigger than $e^{cd^2}$ (already in the case $d=8$ there are more then 2500 such components!)
 \end{example}
 
 The observation that complex discriminants have real codimension two, and therefore cannot separate, whereas real discriminants have real codimension one and therefore \emph{can} separate is at the origin of the notion of ``generic'' from complex algebraic geometry; at the same time, it motivates the interest into probabilistic questions in real algebraic geometry.

\subsection{From generic to random}The transition from classical algebraic geometry to random algebraic geometry happens by insisting on the measure--theoretic aspect of the notion of generic. Suppose for instance that our parameter space $P$ also comes with a reasonable probability measure $\mu$, where by ``reasonable'' we mean that $\mu$ is absolutely continuous with respect to the Lebesgue measure\footnote{The property of being absolutely continuous with respect to the Lebesgue measure is well defined for a measure on a smooth manifold $P$, for the same reason for which the notion of set of measure zero is well defined (even if no preferred Lebesgue measure exists on $P$).}. Then, given a discriminant $\Sigma\subset P$ separating $P$ into many chambers it makes sense to study the measure of these chambers. Phrasing this into probabilistic language: what is the probability that a point $p\in P$ sampled at random from this measure belongs to a given chamber? Similarly, given a smooth map $f:X\to P$, with $X$ compact\footnote{Compactness of $X$ ensures that the regular fibers of $f$ have finite dimensional cohomology, but it is not essential -- having compact fibers would be enough, for instance.}, one might ask for the expectation of the topology (measured by the Betti numbers\footnote{Given a topological space $X$ we denote its $k$--th Betti number by $b_k(X)=\dim(H_{k}(X,\Z_2))$ and its total Betti number by $b(X)=\sum_{k\geq 0}b_k(X)$. For us $X$ will be semialgebraic, and in particular its Betti numbers will be finite.}, for instance) of the fibers of $f$, which in this language means computing the following integral:
\be \label{eq:mean}\underset{p\in P}{\EE}b(f^{-1}(p)):=\int_{P}b(f^{-1}(p))\mathrm{d}\mu \ee
Notice that, in the hypothesis of  \cref{thm:disc}, the function $p\mapsto b(f^{-1}(p))$ is integrable, since it is bounded. This is a consequence of the fact that $P\setminus \Sigma$ has finitely many components and that each regular fiber of $f$ has finite dimensional cohomology.
Clearly, if $P\setminus \Sigma$ consists of one single chamber, as it happens in the complex case, this chamber has probability one and the above expectation equals the sum of the Betti numbers of the generic fiber, no matter the choice of the reasonable measure.

The random approach offers a new point of view on some classical questions in real algebraic geometry. For instance, in the context of \cref{ex:hypersurfaces}, one might formulate a probabilistic version of Hilbert's Sixteenth Problem: what is the expected number of connected components of a random hypersurface of degree $d$ in $\RP^n$? And what is the structure of the arrangement of these components? To give a precise formulation of the first of these two questions we can proceed as follows: we endow the space $P$ of polynomials of degree $d$ in $n+1$ variables with a reasonable probability distribution $\mu$, and then ask for the expectation of the random variable $b_0(Z(p))$, i.e. we ask for the value of the integral $\int_P b_0(Z(p))\mathrm{d}\mu$. The second question requires a little bit more of work, and we postpone its discussion to \cref{sec:topology} below. 

Of course, the answer to questions like these depends on the choice of the probability measure on the space $P$. This is a delicate point of the subject but, as we will see, in most cases there are some reasonable choices that also have some geometric meaning.

\subsection{A little of Probability}\label{sec:proba}I will recall here some basic facts from probability theory. As the reader will see, these are completely elementary and no hard material is needed for the moment -- I will introduce additional material as we go further in the subject.

Recall that a probability space $(P, \mathcal{F}, \mu)$ is just a measure space (i.e. a sample space $P$, a sigma--algebra $\mathcal{F}$ of subsets of $P$ and a measure $\mu $ on $\mathcal{F}$) such that $\mu(P)=1$. If $h:P\to Q$ is a measurable map, the measure $h_\#\mu$ on $Q$ defined by $h_\#\mu(A):=\mu(h^{-1}(A))$ is called the \emph{pushforward measure}.  If the probability measure we are referring to is clear from the context, the probability of a measurable set $E\subseteq P$ will be denoted by $\PP(E)\in [0,1].$ If $X$ is a topological space, a random variable $\xi:P \to X$ is just a measurable map (i.e. the preimage under $\xi$ of any open set is a measurable set in $P$). We will often completely forget about this abstract setting and simply assume that an underlying probability space exists. However, sometimes it is useful, at least for acquiring confidence, to get back to these definitions. 

If a random variable $\xi:P\to \R$ is Lebesgue integrable with respect to the measure $\mu$, its  expectation is defined to be the Lebesgue integral 
$$\EE\xi:=\int_{P}\xi \mathrm{d}\mu.$$
In most of the cases we will be interested in bounded random variables, hence automatically integrable. Two random  variables $\xi_1, \xi_2:P\to X$ are said to have the same distribution if for every open set $A\subset X$ we have $\PP(\xi_1\in A)=\PP(\xi_2\in A)$; in this case we will write $\xi_1\sim \xi_2.$
The random variables $\xi_1, \ldots, \xi_n:P\to X$ are said to be independent if for all possible choices of open sets $A_1, \ldots, A_n\subset X$ we have
$$ \PP(\xi_1\in A_1, \ldots, \xi_n\in A_n)=\PP(\xi_1\in A_1)\cdots \PP(\xi_n\in A_n).$$
\subsubsection{Gaussian random variables}Gaussian variables are simple to study, natural and ubiquitous. In the context of random algebraic geometry, very little can be said outside the world of gaussian variables.  A \emph{standard gaussian variable} is a random variable $\xi:P\to \R$ such that for every open set $A\subseteq \R$
$$ \PP(\xi\in A)=\frac{1}{\sqrt{2\pi}}\int_{A}e^{-\frac{x^2}{2}}\mathrm{d}x.$$
In several variables, let $\xi=(\xi_1, \ldots, \xi_n)$ be a vector filled with independent standard gaussians. We denote by $\gamma_n$ the probability measure on $\R^n$ defined, for every measurable $A\subseteq \R^n$, by:
\be\label{eq:gaussianmulti} \gamma_n(A):=\frac{1}{(2\pi)^{\frac{n}{2}}}\int_{A}e^{-\frac{\|x\|^2}{2}}\mathrm{d}x.\ee
In this way $\R^n$ becomes a probability space, called the \emph{standard multivariate gaussian space}.

\begin{remark}[The gaussian distribution in polar coordinates]Let $j:S^{n-1}\hookrightarrow \R^{n}$ be the inclusion and consider the ``polar coordinates'' map $\psi:S^{n-1}\times (0, \infty)\to \R^n$ given by 
$$ \psi(\theta, \rho)=\rho j(\theta).$$ Denoting by ``$\mathrm{d}\theta$'' the integration on $S^{n-1}$ with respect to the standard volume form, and using the change of variables formula, we see that if $A\subseteq \R^n$ is an open set,
\be\label{eq:gaussian} \gamma_n(A)=\frac{1}{(2\pi)^{\frac{n}{2}}}\int_{A}e^{-\frac{\|x\|^2}{2}}\mathrm{d}x=\frac{1}{(2\pi)^{\frac{n}{2}}}\int_{\psi^{-1}(A)}\rho^{n-1}e^{-\frac{\|\rho\|^2}{2}}\mathrm{d}\rho \mathrm{d}\theta.\ee
It follows from \eqref{eq:gaussian} that, if $g\in O(n)$ is an orthogonal matrix,  the random variable $g\xi$ has the same distribution of $\xi$. This is relevant especially when we are interested in computing expectations as in \eqref{eq:mean}.
\end{remark}

We will use the standard gaussian space to put probability measures on other vector spaces. We can do this as follows: given  a finite--dimensional vector space $V$ and a surjective linear map $h:\R^n\to V$, we simply consider the measure $\mu=h_\#\gamma_n$. In this way $(V, \mu)$ becomes what is called a nondegenerate\footnote{If $h:\R^n\to V$ is not surjective,  still $h_\#\gamma_n$ is called a gaussian measure, but its support is a smaller subspace $V'\subset V$. We will only be interested in the nondegenerate case} \emph{gaussian space}
 (of course the probability measure just defined depends on $h$, no need to say).

\begin{remark}[Gaussian measures and scalar products]\label{remark:gaussianscalar}One interesting fact to observe is that the choice of a nondegenerate gaussian distribution on $V$ is equivalent to the choice of a scalar product on it. If $h:\R^n\to V$ is a linear map inducing $h_\#\gamma_n$, endowing $\R^n$ with the standard Euclidean structure, we write $\R^n=\ker(h)\oplus \ker(h)^\perp$ and use the fact that $V=\mathrm{im}(h)\simeq \ker(h)^\perp$ to put a scalar product on $V$. Viceversa, if $V$ is endowed with a scalar product $\langle\cdot,\cdot \rangle$, we can pick an orthonormal basis $\{v_1, \ldots, v_m\}$ for it and define $h:\R^m\to V$ by $h(e_j):=v_j$. Notice that, in this way, a random element from $V$ (i.e. a random variable with values in $V$ distributed according to the probability distribution $h_\#\gamma_m$) can be written as:
\be v=\xi_1 v_1+\cdots+\xi_m v_m,\ee
i.e. it is a linear combination of the basis element with random (gaussian) coefficients.
\end{remark}

\subsection{Random linear spaces}\label{sec:randomlinear}In this section we will put a probability distribution on the simplest families of objects we will encounter: Grassmannians, i.e. families of linear spaces. Let us start with some general observations. 

If our parameter space $P$ is endowed with a Riemannian metric $g$, recall that we can define on it a volume density, which we denote by $\mathrm{vol}_g$. For every Borel set $E\subseteq P$ we have a notion of volume, defined by $\mathrm{vol}_g(E):=\int_E \mathrm{d}\mathrm{vol}_g.$ If the space has finite total volume, i.e. if $\mathrm{vol}_{g}(P)<\infty$, then we can normalize this volume and turn $P$ into a probability space: for every $E\subseteq P$ measurable we define
$$\PP(E):=\frac{\mathrm{vol}_g(E)}{\mathrm{vol}_g(P)}.$$
We call the result of this construction the \emph{uniform distribution} on $P$ (of course this depends on the metric $g$ but, when clear from the context, we will omit the dependence on $g$ in the notation). 

For example, the uniform measure on $S^n$ is obtained by normalizing the volume coming from the Riemannian metric induced from $\R^{n+1}$. Similarly, since the antipodal map $x\mapsto -x$ is an isometry of $S^n$, the Riemannian metric induced from $\R^{n+1}$ descends to a Riemannian metric on $\RP^n$, which we will call the \emph{quotient metric}; normalizing its volume we get the uniform measure on $\RP^n$. Similarly, the uniform measure on the orthogonal group $O(n)$ is obtained by normalizing the volume coming from the Riemannian metric induced by $O(n)\hookrightarrow \R^{n\times n}$ (this is also called the \emph{Haar measure}).

Let us apply this idea in the case $P=\mathrm{G}(k,n)$ is the Grassmannian of $k$--dimensional vector subspaces of $\R^n$. First notice that we can view the Grassmannian as the quotient:
\be \mathrm{G}(k,n)=\frac{O(n)}{O(k)\times O(n-k)}.\ee
The subgroup $O(k)\times O(n-k)$ acts by left translations on $O(n)$ by isometries, and the Riemannian metric of $O(n)$ (induced by the standard Euclidean structure on the ambient space $\R^{n\times n}$) descends to a Riemannian metric on the quotient $\mathrm{G}(k,n).$ Normalizing the total volume induced by this metric,  we get the uniform distribution on $\mathrm{G}(k,n)$ and we can therefore introduce the notion of ``random $k$-dimensional linear space'': this is just a point sampled uniformly from $\mathrm{G}(k,n)$.
Clearly this can be done also for the Grassmannian $\G(k,n)$ of projective subspaces\footnote{Occasionally we will use the name \emph{$k$--flat} for a projective subspace of dimension $k$.} of dimension $k$ in $\RP^n$, since $\G(k,n)\simeq G(k+1, n+1).$

Notice that the uniform distribution on $\mathrm{G}(k,n)$ is invariant under the natural action of $O(n)$, and in fact it is the \emph{unique} probability distribution\footnote{Notice that, except for the case $(k,n)=(2,4)$ there is a unique (up to multiples) Riemannian metric on $G(k,n)$ which is invariant under the action of $O(n)$. The uniqueness of a probability distribution with such property holds instead for all $(k,n)$.} on $\mathrm{G}(k,n)$ which is invariant under this action. For practical purposes, a way to sample a random subspace uniformly from $\mathrm{G}(k,n)$ is the following: we pick a matrix $M\in \R^{n\times k}$ filled with independent standard Gaussian variables and we consider the span of its columns. With probability one this span is $k$--dimensional, hence the random matrix $M$ induces a probability distribution on $\mathrm{G}(k,n)$. Since for every $g\in O(n)$ the matrices $M$ and $gM$ have the same distribution, the induced distribution on $\mathrm{G}(k,n)$ is $O(n)$--invariant and therefore it coincides with the uniform distribution. 

Another equivalent way of defining the uniform distribution on $\mathrm{G}(k,n)$ is the following. Fix a linear space $\R^k\subseteq \R^n$. Normalizing the total volume of $O(n)$, we can think of it as a probability space and the map $g\mapsto g\R^k$ is therefore a random variable with values in $\mathrm{G}(k,n)$. The corresponding probability distribution is $O(n)$--invariant and consequently, by uniqueness, it is the uniform distribution.

\begin{remark}This construction can also be applied for the definition of a random \emph{complex} subspace, simply by replacing the orthogonal group with the unitary group, as we will do in \cref{sec:IGFC}. \end{remark}

\subsection{A measure on the space of polynomials}\label{sec:BWdef}We will now move to the case of polynomials, starting with an important remark. As we have seen, in the linear case there is essentially one natural probability distribution to consider, but this will no longer be true for polynomials of higher degree. Let me comment on the word ``natural'. We will be interested in the zero set of a homogeneous polynomial in projective space. Since there is no preferred points or directions in $\RP^n$, it is ``natural'' to require that our probability distribution is invariant under the action of the orthogonal group $O(n+1)$ by change of variables. 
There is an infinite dimensional family of such distributions! If we insist that the distribution should also be gaussian and nondegenerate, by \cref{remark:gaussianscalar}, looking for such distribution is equivalent to find a scalar product on the space of polynomials which is invariant under the action of $O(n+1)$ by change of variables. A probability measure, on the space of polynomials, satisfying these properties (gaussian, nondegenerate and invariant under change of variables) will be called an \emph{invariant measure}. Even in this case, there are many possibilities -- but now a finite--dimensional family depending on $\lfloor\frac{d}{2}\rfloor+1$ parameters (see \cref{sec:projection}). 

The measure that we will introduce now, which will be called the \emph{Bombieri--Weyl} measure (or the Kostlan measure), is special even among the family of invariant measures: it comes from the unique invariant measure on the space of complex polynomials. In order to introduce it we first consider the Bombieri--Weyl hermitian structure on the space of complex polynomials, defined for $p,q\in \C[z_0, \ldots, z_n]_{(d)}$ by
$$ \langle p, q\rangle_{\mathrm{BW}}:=\frac{1}{\pi^{n+1}}\int_{\C^{n+1}}p(z)\overline{q(z)}e^{-\|z\|^2}\mathrm{d} z,$$
where $\mathrm{d}z:=(i/2)^{n+1}\mathrm{d}z_0\mathrm{d}\overline{z_0}\cdots \mathrm{d}z_n\mathrm{d}\overline{z_n}$ is the Lebesgue measure.
It is not difficult to show that the Bombieri--Weyl hermitian structure is invariant under the action of $U(n+1)$ by change of variables. Moreover, since this action is irreducible (over $\C$), Schur's Lemma implies that this is the unique\footnote{In the real case, the non--uniqueness is a consequence of the fact that the orthogonal change of variables representation is not irreducible, see \cref{sec:projection}.} invariant hermitian structure (up to multiples) on the space of complex polynomials.

A hermitian orthonormal basis for the Bombieri--Weyl structure is given by 
\be\label{eq:BWbasis}\mathcal{B}_{n,d}:= \left\{\sigma_\alpha=\left(\frac{d!}{\alpha_0!\cdots \alpha_n!}\right)^{\frac{1}{2}}z_0^{\alpha_0}\cdots z_n^{\alpha_n}\right\}_{|\alpha|=d}.\ee
(I suggest that the reader tries to prove that different monomials are orthogonal.) 

We define the \emph{Bombieri--Weyl measure} as the probability measure $\gamma_{\mathrm{BW}}$ induced on the space of polynomials $\R[x_0, \ldots, x_n]_{(d)}$ by the random variable 
\be\label{eq:Kostlan1}p(x)=\sum_{|\alpha|=d}\xi_\alpha\cdot \left(\frac{d!}{\alpha_0!\cdots \alpha_n!}\right)^{\frac{1}{2}}x_0^{\alpha_0}\cdots x_n^{\alpha_n},\ee
where $\{\xi_\alpha\}$ is a family of independent standard gaussians. The wording ``Bombieri--Weyl polynomial'' will mean a random variable with values in $\R[x_0, \ldots, x_n]_{(d)}$ distributed as \eqref{eq:Kostlan1}.

Since the basis \eqref{eq:BWbasis} is real, and since $O(n+1)\subset U(n+1)$, the real part of the Bombieri--Weyl hermitian structure restricts to an invariant scalar product on $\R[x_0, \ldots, x_n]_{(d)},$ which we still call the \emph{Bombieri--Weyl} scalar product. Arguing as in \cref{remark:gaussianscalar}, the gaussian measure defined by \eqref{eq:Kostlan1} is the one associated to the Bombieri--Weyl scalar product and, since the latter is $O(n+1)$--invariant, so is the corresponding measure.

\begin{example}[The GOE ensemble]\label{example:GOE}In the special case $d=2$, the space of quadratic forms is isomorphic to the space of symmetric matrices, through the linear isomorphism 
\be\label{eq:isoQ}\varphi:\R[x_1, \ldots, x_n]_{(2)}\to \mathrm{Sym}(n, \R)\ee  that associates to every quadratic form $q$ the symmetric matrix $Q$ defined by $q(x)= \langle x,Qx\rangle$. Therefore the Bombieri--Weyl measure $\gamma_{\mathrm{BW}}$ induces a gaussian measure $\gamma_{\mathrm{GOE}}:=\varphi_\#\gamma_{\mathrm{BW}}$ on $\mathrm{Sym}(n, \R)$. The probability space $(\mathrm{Sym}(n, \R), \gamma_{\mathrm{GOE}})$ is called the \emph{Gaussian Orthogonal Ensemble}: it is an ensemble (i.e. a probability space) which is gaussian and invariant under the action of the orthogonal group by change of variables (which on the space of symmetric matrices is just the action by congruence). In some senses, Random Matrix Theory is degree--two Random Algebraic Geometry.
\end{example}

\section{Degree and volume}\label{sec:degreevsvolume}
\subsection{Integral geometry}\label{sec:integralgeometry}One of the most beautiful formulas in differential geometry, to my opinion, is the so called \emph{Integral Geometry Formula}, or kinematic formula. I learnt the subject from the monograph by R. Howard \cite{Howard}, whose reading is highly recommended. This formula was used for probabilistic purposes for the first time by A. Edelman and E. Kostlan in \cite{EdelmanKostlan95}. 

I will begin by discussing the projective version of such formula. First let me recall, following the discussion from \cref{sec:randomlinear}, that if $(P, g)$ is a Riemannian manifold and $j:A\hookrightarrow P$ is a submanifold, then $A$ inherits a Riemannian metric $j^*g$ and, consequently, we can define on it a volume density $\mathrm{vol}_{j^*g}$. If $\dim(A)<\dim(P)$, the total volume of $A$ with respect to $\mathrm{vol}_{g}$ is zero ($A$ is a zero--measure subset of $P$), but $\mathrm{vol}_{j^*g}(A)>0$. Below, for a submanifold $j:A\hookrightarrow P$ we will simply write ``$\mathrm{vol}(A)$'' for $\mathrm{vol}_{j^*g}(A)$. Also, we denote by ``$\mathrm{d}g$'' the integration on the orthogonal group with respect to the uniform measure.

\begin{theorem}[The Integral Geometry Formula for $\RP^n$]\label{thm:IGF}Let $A$ and $B$ be compact submanifolds of $\RP^n$ of dimensions $\dim(A)=a$ and $\dim(B)=b$ and denote by $c=n-(n-a)-(n-b)$. For almost all $g\in O(n+1)$ the intersection $A\cap gB$ is transversal and of dimension $c$, the function $g\mapsto\mathrm{vol}(A\cap gB)$ is measurable and
\be\label{eq:IGF} \int_{O(n+1)}\frac{\mathrm{vol}(A\cap gB)}{\mathrm{vol}(\RP^c)}\mathrm{d}g=\frac{\mathrm{vol}(A)}{\mathrm{vol}(\RP^a)}\frac{\mathrm{vol}(B)}{\mathrm{vol}(\RP^b)}.\ee
\end{theorem}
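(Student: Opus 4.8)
The strategy is to reduce the statement to a single universal constant by an invariance argument, and then pin down that constant by evaluating both sides on a convenient example. First I would check that the left-hand side of \eqref{eq:IGF} is well-defined: by the parametric transversality theorem (Thom's lemma), for almost every $g\in O(n+1)$ the submanifold $gB$ is transverse to $A$, so $A\cap gB$ is a submanifold of dimension $a+b-n=c$ (empty if $c<0$, in which case both sides are zero); measurability of $g\mapsto \mathrm{vol}(A\cap gB)$ follows from writing the volume as an integral of a continuous density over the smooth locus. Next, I would introduce the functional $\mu(A,B):=\int_{O(n+1)}\mathrm{vol}(A\cap gB)\,\mathrm{d}g$ and observe that, for fixed $B$, the map $A\mapsto \mu(A,B)$ is additive under disjoint unions (more generally finitely additive on pieces), is invariant under the $O(n+1)$-action $A\mapsto hA$ (by the bi-invariance of the Haar measure on $O(n+1)$), and is ``localizable'', i.e. it is given by integrating a density over $A$ that depends only on the infinitesimal data of $A$ at each point.

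The key point is then a \emph{pointwise linear-algebra computation}: at a point $x\in A$, the contribution to $\mu(A,B)$ depends only on the tangent space $T_xA$ as a point in the Grassmannian of $a$-planes, and by $O(n+1)$-invariance this contribution is the same at every point and for every tangent plane. Concretely, after fixing $x$ and using the transitive action of the isotropy group of $x$ on $a$-planes through $x$, the density of $\mu(A,B)$ with respect to $\mathrm{vol}_A$ is a constant $\lambda(n,a,b)$ depending only on the dimensions. Hence $\mu(A,B)=\lambda(n,a,b)\,\mathrm{vol}(A)$, and by the symmetric role of $A$ and $B$ (swap them, and replace $g$ by $g^{-1}$, again using bi-invariance of Haar measure) the same argument gives $\mu(A,B)=\lambda(n,a,b)\,\mathrm{vol}(B)\cdot(\text{const})$; combining, $\mu(A,B)=c_{n,a,b}\,\mathrm{vol}(A)\,\mathrm{vol}(B)$ for a universal constant $c_{n,a,b}$.

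Finally I would fix the constant by testing on $A=\mathbb{RP}^a$ and $B=\mathbb{RP}^b$ sitting as standard projective subspaces: then for generic $g$ the intersection $A\cap gB$ is a standard $\mathbb{RP}^c$, so $\mathrm{vol}(A\cap gB)=\mathrm{vol}(\mathbb{RP}^c)$ for almost every $g$, giving $\mathrm{vol}(\mathbb{RP}^c)=c_{n,a,b}\,\mathrm{vol}(\mathbb{RP}^a)\,\mathrm{vol}(\mathbb{RP}^b)$. Substituting back yields exactly \eqref{eq:IGF}. The main obstacle is the middle step: justifying that $\mu(A,B)$ is genuinely \emph{local} on $A$ with a density depending only on $T_xA$. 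This is the heart of the matter and requires a careful change-of-variables argument — essentially a co-area computation on the incidence variety $\{(x,g): x\in A\cap gB\}$ fibered over $O(n+1)$ in one way and over $A$ in another — together with the observation that the resulting Jacobian factor, after integrating out the isotropy directions, is constant in $x$ by homogeneity. Once locality is in hand, the invariance and normalization steps are routine. (I would refer to Howard's monograph \cite{Howard} for the full details of the density computation.)
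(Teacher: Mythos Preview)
The paper does not actually give a proof of this theorem: it states the result and refers the reader to Howard's monograph \cite{Howard} for the details. Your sketch is precisely the standard approach found there --- the incidence-variety (double-fibration) coarea argument, combined with the fact that the isotropy group $O(n)$ of a point in $\RP^n$ acts transitively on the Grassmannian of $a$--planes in the tangent space, which forces the density to be constant; then one normalizes on linear subspaces. So your proposal is correct and aligns with the reference the paper defers to.

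One small expository wrinkle: when you first write that the density equals ``a constant $\lambda(n,a,b)$ depending only on the dimensions'', that claim is premature --- at that stage the constant may still depend on $B$. It is only after invoking the symmetry in $A$ and $B$ (via $g\mapsto g^{-1}$ and bi-invariance of Haar measure) that you get $\lambda(B)\,\mathrm{vol}(A)=\lambda'(A)\,\mathrm{vol}(B)$ and hence $\mu(A,B)=c_{n,a,b}\,\mathrm{vol}(A)\,\mathrm{vol}(B)$. You do carry out that step, so there is no logical gap, but the earlier sentence should be phrased as ``a constant depending on $B$ and the dimensions''.
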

The fact that in the previous statement $A$ and $B$ are assumed to be compact is not really necessary, it is enough to assume that they are immersed submanifolds with finite total volume.  

We will see various declinations of the philosophy from \cref{thm:IGF}, ultimately dealing with integration over a compact group acting by isometries on a Riemannian homogeneous space, but the reader should be made aware that such a pleasant formula as in \eqref{eq:IGF} cannot exists in greater generality, unless some further assumptions are made on the submanifolds $A$ and $B$. 

Using the language of \cref{sec:randomlinear}, \eqref{eq:IGF} has a clear probabilistic interpretation. For instance, applying \eqref{eq:IGF} in the case $B=\RP^b$ and $a+b=n$, it tells that the expectation of the number of points of intersection between a submanifold $A\hookrightarrow \RP^n$ and a random flat of complementary dimension equals $\frac{\mathrm{vol}(A)}{\mathrm{vol}(\RP^a)}$.

\subsection{Newton polytopes}\label{sec:IGFC}
I would like now to make a short detour and introduce the complex version of the Integral Geometry Formula, for the purpose of giving a proof of the Bernstein--Khovanskii--Kushnirenko Theorem.

\begin{theorem}[Bernstein--Khovanskii--Kushnirenko\footnote{This statement is a special case of the more general situation in which the supports of the equations are different, i.e. when for every $j=1, \ldots, n$ in the $j$--th equation the monomials have exponents in $E_j\subset \Z^n$. In this case we get $n$ polytopes $P_{E_1}, \ldots, P_{E_n}\subset \R^n$, one for each equation, and the generic number of solutions in $(\C^*)^n$ of the corresponding system equals the mixed volume $\mathrm{MV}(P_{E_1}, \ldots, P_{E_n}).$}]\label{thm:BKK} Let $E\subset \Z^n$ be a finite set and denote by $P_E=\mathrm{conv}(E)\subset \R^n$ its convex hull. Consider the system of $n$ equations in $n$ variables:
\be\label{eq:BKK}\left\{\sum_{\eta\in E}c_{1, \eta}z_1^{\eta_1}\cdots z_n^{\eta_n}=\cdots=\sum_{\eta\in E}c_{n, \eta}z_1^{\eta_1}\cdots z_n^{\eta_n}=0\right\}.
\ee
For the generic choice of the coefficients $(c_{j, \eta})\in \C^{|E|\times n}$ the number of solutions of \eqref{eq:BKK} in $(\C^*)^n$ equals $n!\mathrm{vol}(P_E).$
\end{theorem}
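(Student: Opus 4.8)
The strategy mirrors the projective real case but works over $\C$. The key geometric idea is that the system \eqref{eq:BKK} defines, via the monomial map, the intersection of $n$ translates of a single toric variety inside a complex projective space, and the generic number of solutions becomes a volume computation through the complex kinematic formula. Concretely, let $E = \{\eta_1, \ldots, \eta_N\} \subset \Z^n$ with $N = |E|$, and consider the monomial (Veronese-type) map
\be
\phi: (\C^*)^n \to \CP^{N-1}, \qquad z \mapsto [z^{\eta_1} : \cdots : z^{\eta_N}].
\ee
Let $Y = \overline{\phi((\C^*)^n)} \subset \CP^{N-1}$ be its closure, a (possibly singular, but irreducible) projective toric variety of complex dimension $n$. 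Each equation $\sum_{\eta \in E} c_{j,\eta} z^\eta = 0$ is the pullback under $\phi$ of a hyperplane $H_j \subset \CP^{N-1}$, and for generic coefficients the $H_j$ are generic hyperplanes. Thus the number of solutions of \eqref{eq:BKK} in $(\C^*)^n$ equals the number of points of $Y \cap H_1 \cap \cdots \cap H_n$ lying in the open part $\phi((\C^*)^n)$, which for a generic choice of hyperplanes is the whole intersection (one must check the intersection avoids the toric boundary $Y \setminus \phi((\C^*)^n)$, whose dimension is $<n$). By definition this count is the degree $\deg(Y)$ of the toric variety.

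The first main step is therefore to reduce BKK to the classical fact that $\deg(Y) = n!\,\mathrm{vol}(P_E)$ — the normalized volume of the Newton polytope. The plan is to prove this degree formula using the \emph{complex} Integral Geometry Formula: the analogue of \cref{thm:IGF} with $O(n+1)$ replaced by $U(N)$ acting on $\CP^{N-1}$, and real volumes of real projective subspaces replaced by volumes with respect to the Fubini–Study metric. Applied to $A = Y$ (of complex dimension $n$, hence real dimension $2n$) and $B = \CP^{N-1-n}$ a complementary-dimensional projective subspace, the formula gives
\be
\EE_{g \in U(N)} \#(Y \cap g\,\CP^{N-1-n}) = \frac{\mathrm{vol}(Y)}{\mathrm{vol}(\CP^n)}.
\ee
Since for generic $g$ the intersection is transverse and consists of exactly $\deg(Y)$ points (Bézout/linear-section characterization of degree, valid for irreducible projective varieties, singularities of $Y$ being harmless as they are avoided generically), the left side equals $\deg(Y)$. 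So $\deg(Y) = \mathrm{vol}(Y)/\mathrm{vol}(\CP^n)$, and everything comes down to computing the Fubini–Study volume of the toric variety $Y$.

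The second main step — and the real obstacle — is evaluating $\mathrm{vol}(Y)$ and identifying it with $n!\,\mathrm{vol}(P_E)\cdot \mathrm{vol}(\CP^n)$. Here I would pull the Fubini–Study volume form back along $\phi$ and compute on $(\C^*)^n$. Writing $z_k = e^{w_k}$ with $w_k = s_k + \ii\theta_k$, the map becomes $w \mapsto [\,e^{\langle \eta_1, w\rangle} : \cdots : e^{\langle \eta_N, w\rangle}\,]$, and the pullback of the Kähler potential $\log \sum_j |z^{\eta_j}|^2 = \log \sum_j e^{2\langle \eta_j, s\rangle}$ is (up to constants) the \emph{Legendre-type} function whose Hessian, after integrating out the torus angles $\theta$, produces exactly the Lebesgue measure of the moment polytope $P_E$. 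This is the standard fact from toric/symplectic geometry that the pushforward of the Fubini–Study volume under the moment map is (a multiple of) Lebesgue measure on $P_E$; carrying it out rigorously requires the moment-map computation for projective toric varieties and tracking the normalization constants so that the factor $n!$ emerges. I expect the bookkeeping of constants (reconciling $\mathrm{vol}(\CP^n)$, the $\pi^n/n!$-type factors from the FS form, and the torus volume $(2\pi)^n$) to be where most of the effort lies; the conceptual content is entirely in the toric moment-map picture. The footnote's mixed-volume generalization follows the same route, embedding into a product $\prod_j \CP^{|E_j|-1}$ and replacing the single degree computation by a multilinearity argument giving $\mathrm{MV}(P_{E_1}, \ldots, P_{E_n})$.
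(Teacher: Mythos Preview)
Your proposal is correct and follows essentially the same route as the paper: embed $(\C^*)^n$ into $\CP^{|E|-1}$ via the monomial map, identify the generic solution count with the degree of the toric closure, compute that degree as a Fubini--Study volume via the complex Integral Geometry Formula, and then evaluate the volume using the moment map to $P_E$. The only cosmetic difference is that the paper writes down the moment map $F_E([w_\eta])=\frac{\sum_\eta \eta|w_\eta|^2}{\sum_\eta |w_\eta|^2}$ explicitly and cites Atiyah's convexity theorem together with the volume identity $\mathrm{vol}(\mu_E^{-1}(W))=\pi^n\mathrm{vol}(W)$, whereas you sketch the same fact by pulling back the K\"ahler potential and integrating out the torus angles --- this is exactly the computation underlying the cited result, so no genuine divergence.
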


Before proving \cref{thm:BKK}, let us establish a few interesting facts. First let us talk about the complex version of \eqref{eq:IGF}. We view the complex projective space $\CP^n$ as the quotient of $S^{2n+1}\subset \C^{n+1}$ by the action of $U(1)\simeq S^1$. Since this action is by isometries on $S^{2n+1}$ (with respect ot the Riemannian structure induced by the standard Hermitian structure on $\C^{n+1}$), the Riemannian metric of $S^{2n+1}$ descends to a quotient metric on $\CP^n$. The induced volume density on $\CP^n$ makes it of total volume:
\be \mathrm{vol}(\CP^n)=\frac{\pi^n}{n!}=\frac{\mathrm{vol}(S^{2n+1})}{\mathrm{vol}(S^1)}.\ee
With this construction, the unitary group $U(n+1)$ acts on $\CP^n$ by isometries; moreover if $Y\hookrightarrow \CP^n$ is a complex submanifold, then for every $g\in U(n+1)$ also $g Y\hookrightarrow \CP^n$ is a complex submanifold. If $Y\hookrightarrow \CP^n$ is a singular algebraic set, we define its volume by the volume of the set of its smooth points. This means: we restrict the Riemannian metric of $\CP^n$ to the set of smooth points of $Y$, turning it into a Riemannian manifold, and we consider the corresponding density.

Now comes the complex version of \eqref{eq:IGF}. Let $A, B\subset \CP^n$ be open subsets of algebraic sets of complex dimensions $\dim_{\C}(A)=a$ and $\dim_{\C}(B)=b$. Then, for almost every $g\in U(n+1)$ the intersection $A\cap gB$ is transversal of complex dimension $c=n-(n-a)-(n-b)$ and:
\be\label{eq:IGFC} \int_{U(n+1)}\frac{\mathrm{vol}(A\cap gB)}{\mathrm{vol}(\CP^c)}\mathrm{d}g=\frac{\mathrm{vol}(A)}{\mathrm{vol}(\CP^a)}\frac{\mathrm{vol}(B)}{\mathrm{vol}(\CP^b)}.\ee
Here ``$\mathrm{d}g$'' denotes the integration with respect to the normalized density induced by the inclusion $U(n+1)\hookrightarrow \C^{n\times n}$ as a Riemannian submanifold and $\mathrm{vol}(Y)$ denotes the $2\dim_{\C}(Y)$--dimensional volume of the complex submanifold $Y\hookrightarrow \CP^n$. 
\begin{figure}

\end{figure}
In order to get an exact formula like \eqref{eq:IGFC}, the requirement that $A, B$ are complex submanifolds of $\CP^n$ cannot be dropped: this is related to the fact that the stabilizer $U(n)$ of a point $p\in \CP^n$ under the action of $U(n+1)$ acts transitively on the set of complex directions in $T_p\CP^n$ (see \cref{sec:Schubert} for more details).

An interesting consequence of \eqref{eq:IGFC} is that it establishes a bridge between the notion of ``degree'' from complex algebraic geometry and the notion of ``volume'' from Riemannian geometry.
\begin{theorem}Let $A\subset \CP^n$ be an algebraic set of dimension $\dim_{\C}(A)=a$. Then:
\be \label{eq:volC}\deg(A)=\frac{\mathrm{vol}(A)}{\mathrm{vol}(\CP^a)}.\ee
\end{theorem}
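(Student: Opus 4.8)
The plan is to derive \eqref{eq:volC} from the complex Integral Geometry Formula \eqref{eq:IGFC} by intersecting $A$ with a random projective subspace of complementary dimension. Specifically, I would take $B = L$ to be a fixed projective linear subspace of $\CP^n$ of complex dimension $b = n-a$, so that the intersection dimension becomes $c = n - (n-a) - a = 0$. For almost every $g \in U(n+1)$, the intersection $A \cap gL$ is transversal of complex dimension zero, hence a finite set of points, and its zero--dimensional volume is just its cardinality.

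The first step is to identify the two sides of \eqref{eq:IGFC} under this choice. On the right-hand side, $\dim_\C(B) = b = n-a$, and since $L$ is a projective subspace of $\CP^n$ it is isometric (in the quotient metric) to $\CP^{n-a}$, so $\mathrm{vol}(B) = \mathrm{vol}(\CP^{n-a})$ and the factor $\tfrac{\mathrm{vol}(B)}{\mathrm{vol}(\CP^b)}$ equals $1$. Thus the right-hand side reduces to $\tfrac{\mathrm{vol}(A)}{\mathrm{vol}(\CP^a)}$. On the left-hand side, $\mathrm{vol}(\CP^c) = \mathrm{vol}(\CP^0) = 1$ (a point has volume $1$), so the left-hand side becomes $\int_{U(n+1)} \#(A \cap gL)\, \mathrm{d}g$, the expected number of intersection points of $A$ with a uniformly random $(n-a)$--flat.

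The second step is to observe that this expectation is a constant independent of $g$ that equals $\deg(A)$. By the very definition of degree in complex algebraic geometry, a generic linear subspace of complementary dimension meets $A$ in exactly $\deg(A)$ points (counted with multiplicity one, since generic transversal intersections are reduced); because the condition ``$gL$ is generic with respect to $A$'' holds for all $g$ outside a proper (hence measure--zero) subvariety of $U(n+1)$, the integrand is almost everywhere equal to the constant $\deg(A)$, and the integral over the probability space $U(n+1)$ is therefore $\deg(A)$. Combining the two computations gives $\deg(A) = \tfrac{\mathrm{vol}(A)}{\mathrm{vol}(\CP^a)}$, as claimed.

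The main subtlety — the part that would need the most care to make fully rigorous — is justifying that the generic transversal intersection $A \cap gL$ consists of \emph{exactly} $\deg(A)$ reduced points rather than some smaller number: one must rule out points ``escaping to infinity'' or multiplicities dropping, which is where one uses that $L$ is a genuine projective subspace (so intersection is proper and the projective Bézout/degree count applies) together with the transversality assertion already built into \eqref{eq:IGFC}. One should also confirm the normalization $\mathrm{vol}(\CP^0) = 1$ is consistent with the conventions fixed earlier (the point mass has unit volume), and that $A$ possibly singular causes no trouble — a generic translate $gL$ avoids the singular locus of $A$ since that locus has strictly smaller dimension, so $A \cap gL$ lies in the smooth part where volume was defined.
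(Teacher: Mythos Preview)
Your proposal is correct and follows essentially the same approach as the paper: apply the complex Integral Geometry Formula \eqref{eq:IGFC} with $B=\CP^{n-a}$, so that the right-hand side collapses to $\mathrm{vol}(A)/\mathrm{vol}(\CP^a)$ and the left-hand integrand is almost everywhere the constant $\deg(A)$ by the definition of degree. Your write-up is more explicit about the auxiliary details (singular locus, normalization of $\mathrm{vol}(\CP^0)$), but the argument is the same.
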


\begin{proof}The proof is elementary: one simply applies \eqref{eq:IGFC} with the choice $B=\CP^{n-a}$. In this case the integrand on the left hand side of \eqref{eq:IGFC} is almost everywhere equal to $\deg(A)$ (by definition of degree as the number of points of intersection with a generic subspace of complementary dimension) and the right hand side of \eqref{eq:IGFC} reduces to \eqref{eq:volC}.
\end{proof}

Notice that, viewing the unitary group as a probability space, the map $g\mapsto g\CP^k$, for $g\in U(n+1)$, gives a random variable with values in the complex Grassmannian of $k$--flats in $\CP^n$, inducing on it the uniform distribution. In this way \eqref{eq:IGFC} also has a probabilistic interpretation, with the extra feature that in the complex framework ``random'' and ``generic'' are synonymous.

\begin{proof}[Proof of \cref{thm:BKK}]Consider the map $\nu_E:(\C^*)^n\to \CP^{|E|-1}$ given by:
\be \nu_E(z_1, \ldots, z_n)\mapsto [z_1^{\eta_1}\cdots z_n^{\eta_n}]_{\eta \in E}\ee
(this map is an analogue of the Veronese map from \cref{sec:veronese}). We denote by $T_E$ the projective closure of the image of $\nu_E$:
\be T_E=\overline{\mathrm{im} (\nu_E)}.\ee
The algebraic set $T_E$ is of dimension $n$ and we denote by $\CP^{|E|-1-n}$ the projective subspace, of complementary dimension, given by the vanishing of the last $n$ homogeneous coordinates. 
Observe now that for the generic choice of the coefficients $(c_{j, \eta})\in \C^{|E|\times n}$, i.e. whenever we can write the matrix of coefficients as $(c_{j, \eta})=g\cdot(\mathbf{1}_{|E|}|0)$ with $g\CP^{|E|-n-1}$ transversal to $T_E$ and with the points of intersection in $\mathrm{im}(\nu_E)$, we have: 
\be\label{eq:BKKint} \#\{\textrm{solutions of \eqref{eq:BKK}}\}=\#T_E\cap g\CP^{|E|-1-n}=\frac{n!}{\pi^n}\mathrm{vol}(T_E).\ee
 The second equality in \eqref{eq:BKKint} follows by an application of \eqref{eq:IGFC}, since the integrand is constant almost everywhere. 
 \begin{figure}\begin{center}
  \includegraphics{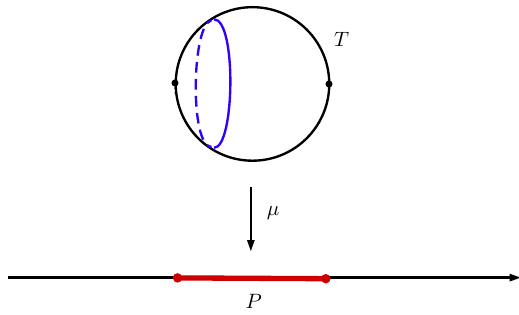}
  \end{center}
  \caption{In the case $E=\{0,1\}\subset \Z$ the embedding $\nu_E:\C^*\to \CP^1$ is given by $z\mapsto[1,z]$. In this case $T=\CP^1$ and the moment map $ \mu:\CP^{1}\to \R$ is given by $[w_0, w_1]\mapsto\frac{|w_1|^2}{|w_0|^2+|w_1|^2}$. The image of $\mu$ is the segment $[0,1]=\mathrm{conv}(E).$}
  \label{fig:moment}
\end{figure}
 In order to prove  \cref{thm:BKK} it remains to establish a connection between the volume of $T_E$ and the volume of the Newton polytope $P_E$. This comes from Hamiltonian geometry. Consider the map $F_E:\CP^{|E|-1}\to \R^n$ defined by:
\be F_E\left([w_\eta]_{\eta\in E}\right)=\frac{\sum_{\eta\in E} \eta |w_\eta|^2}{\sum_{\eta\in E} |w_\eta|^2}.\ee
This map takes value in $\R^n$ since $\eta\in \Z^n\subset \R^n$. We denote by $\mu_E$ the restriction of $F_E$ to $T_E$:
\be \mu_E:T_E\to \R^n.\ee
The map $\mu_E$ is called \emph{the moment map} (see  \cref{fig:moment}). Now comes the relevant fact:  the image of $\mu_E$ is $P_E$ \cite[Theorem 2]{Atyiah1} and  $\mu_E$ is ``volume preserving'' in the sense that for all $W\subseteq P_E$ measurable:
\be \label{eq:moment1}\mathrm{vol}(\mu_E^{-1}(W))=\pi^n\vol(W).\ee
Applying \eqref{eq:moment1} in the case $W=P_E$ we get $\mathrm{vol}(T_E)=\pi^n \mathrm{vol}(P_E)$ and, substituting this into \eqref{eq:BKKint}, it finally proves \cref{thm:BKK}. In other words, for the generic choice of the coefficients in the equations defining \eqref{eq:BKK}
\be \#\{\textrm{solutions of \eqref{eq:BKK}}\}=\frac{n!}{\pi^n}\mathrm{vol}(T_E)=n!\mathrm{vol}(P_E).\ee
\end{proof}
In the special case $E=\{(\eta_1, \ldots, \eta_n)\in \Z^n\,|\, 0\leq \eta_i\leq d\}$, the system \eqref{eq:BKK} consists of $n$ equations of degree $d$ with generic coefficients. In this case the Newton polytope $P_E$ has volume $\frac{d^n}{n!}$, the algebraic set $T_E\subset \CP^{\binom{n+d}{d}-1}$ is the Veronese variety and  \cref{thm:BKK} gives the special case of B\'ezout's Theorem when all the degrees are equal.

\subsection{The Veronese embedding and its geometry}\label{sec:veronese}Once we are given a family of functions (or more generally of sections of a line bundle) on a smooth manifold, a useful idea is to use these functions to embed the manifold in a large dimensional projective space. This is precisely what we have done in  \cref{sec:IGFC} with the map $\nu_E:(\C^*)^n\to \CP^{|E|-1}.$ In this section I will discuss the probabilistic implications of this idea in the world of real algebraic geometry. 

To start with, let us introduce some general setup. Let $M$ be a smooth compact manifold and $L$ be a line bundle on it. We consider a finite family $\mathcal{B}=\{\sigma_0, \ldots, \sigma_\ell\}$ of smooth sections of $L$ and the equation:
\be \label{eq:sections}\sum_{k=0}^\ell\xi_k\sigma_k(x)=0.\ee
At this point the coefficients $\xi_0, \ldots, \xi_k$ are just real numbers, but later they will become random variables. Notice that, if the line bundle $L$ is trivial, then each section is of the form $\sigma(x)=(x, s(x))$ and can be thought as a function on $M$. In the general case this only makes sense in some trivializing chart for $L$, but the zero set of a section is still a well defined object.

Using the family $\mathcal{B}=\{\sigma_0, \ldots, \sigma_\ell\}$ we can construct an associated \emph{Veronese map} $\nu_\mathcal{B}:M\to \RP^{\ell}$, defined by
$$\nu_\mathcal{B}(x)=[\sigma_0(x), \ldots, \sigma_\ell(x)].$$
The map is well defined only if the sections $\sigma_0, \ldots, \sigma_\ell$ have no common zeroes (in algebraic geometry if such a family of sections exists, the line bundle $L$ is said to be \emph{basepoint--free}). Besides this condition, here we will also assume that $\nu_\mathcal{B}:M\to \RP^\ell$ is an embedding (if such a family of sections exists, algebraic geometers say that the line bundle $L$ is \emph{very ample}).

Let now $\sigma=\sum_{k=0}^\ell \xi_k\sigma_k(x)$ and $Z(\sigma)\subset M$ be its zero set. Since we have assumed that $\nu_\mathcal{B}$ is an embedding, $Z(\sigma)$ and $\nu_\mathcal{B}(Z(\sigma))$ are homeomorphic. On the other hand, if we consider the hyperplane $\xi^{\perp}=\{\xi_0y_0+\cdots +\xi_\ell y_\ell=0\}\subset \RP^\ell$, we see that:
\be\label{eq:transform} Z(\sigma)\simeq \nu_\mathcal{B}(Z(\sigma))=\nu_\mathcal{B}(M)\cap \xi^\perp.\ee
The Veronese map transforms the geometry of zero sets of linear combinations of elements from the family $\mathcal{B}$  into the geometry of hyperplane sections of the image $\nu_\mathcal{B}(M)$. 

The special case of interest for us is given by the choice $M=\RP^n$ and $L=\mathcal{O}(d)$. 
By construction\footnote{Since the cocycle of the line bundle $\mathcal{O}(d)$, over the trivializing open cover $\{x_j\neq0\}_{j=0, \ldots, n}$ for $\RP^n$, is given by $g_{ij}([x])=\left(\frac{x_j}{x_i}\right)^d.$} every polynomial $p\in \R[x_0, \ldots, x_n]_{(d)}$ defines in a natural way a section of $\mathcal{O}(d)$ and the zero set of $p$ as a section is the same as the zero set of $p$ as a homogeneous polynomial on the projective space. In this case we consider the following family of sections:
\be\label{eq:Kostlanbasis}\mathcal{B}_{n,d}:= \left\{\sigma_\alpha=\left(\frac{d!}{\alpha_0!\cdots \alpha_n!}\right)^{\frac{1}{2}}x_0^{\alpha_0}\cdots x_n^{\alpha_n}\right\}_{|\alpha|=d}.\ee
The family $\mathcal{B}_{n,d}$, that we already met in \eqref{eq:BWbasis}, is a basis for the space of homogeneous polynomials and, since $\mathcal{O}(d)$ is very ample, the corresponding Veronese map $$\nu_{n,d}:\RP^n\to \RP^{\binom{n+d}{d}-1}$$
is an embedding. The map $\nu_{n,d}$ will also be called the \emph{Kostlan--Veronese} embedding.

As we have seen, the reason for the scaling factors in front of the monomials in \eqref{eq:Kostlanbasis} is that they make the construction invariant under the action of the orthogonal group $O(n+1)$ on the space of polynomials by change of variables (and I have chosen the name Kostlan--Veronese because E. Kostlan investigated the properties of this action). Let us look a little bit more into this construction. 

We need to make a couple of preliminary identifications. First, set $N:= {n+d\choose d}$ and define $$P_{n,d}:=\mathrm{P}(\R[x_0, \ldots, x_n]_{(d)})\simeq \RP^{N-1}.$$ We will consider the explicit isomorphism $s:\RP^{N-1}\to P_{n,d}$ given by $[e_\alpha]\mapsto [\sigma_\alpha]$ (here $e_\alpha\in \R^{N}$ is the $\alpha$--th standard basis vector). When $d=1$ we get the usual identification $[a_0,\ldots, a_n]\mapsto[a_0x_0+\cdots+a_nx_n]$ between $\RP^n$ and its dual $(\RP^n)^*=P_{n, 1}$. Notice that the pull--back under $s$ of the Bombieri--Weyl quotient metric on $P_{n,d}$ is the standard quotient metric on $\RP^{N-1}$ (this is simply because the map $s$ lifts to a linear map which sends the standard orthonormal basis to the Bombieri--Weyl one). 
We also notice that in the commutative diagram
$$\begin{tikzcd}
\RP^n \arrow[rr, "{\nu_{n,d}}"] \arrow[d, "s"'] &  & \RP^{N-1} \arrow[d, "s"] \\
{P_{n,1}} \arrow[rr, "\nu"]                     &  & {P_{n, d}}              
\end{tikzcd}$$
the map $\nu$ sends a linear form to its $d$--th power (this follows from the multinomial expansion).


Let us now consider actions of the group $O(n+1)$ on these objects. Every $g\in O(n+1)$ acts linearly on $\RP^n$, and dually on $P_{n,1}$, by isometries. The group $O(n+1)$ also acts on $P_{n,d}$ through the change of variables representation 
$$\rho:O(n+1)\to \mathrm{GL}(\R[x_0, \ldots, x_n]_{(d)}),$$ 
defined by $\rho(g)p:=p\circ g^{-1}.$  In particular, using the identification $\RP^{N-1}\simeq P_{n,d}$, the orthogonal group also acts on $\RP^{N-1}$ (we still denote by $\rho(g):\RP^{N-1}\to \RP^{N-1}$ this action). Since the action on $P_{n,d}$ with the Bombieri--Weyl metric is by isometries (\cref{sec:BWdef}), and since the pull--back of the Bombieri--Weyl metric under $s$ is the standard metric,  then the action defined in this way on $\RP^{N-1}$ with the standard metric is also by isometries.

Denoting by $V_{n,d}:=\nu_{n,d}(\RP^n)$, for every $g\in O(n+1)$ we have the commutative diagram:
\be\label{eq:commutative}\begin{tikzcd}
\RP^n \arrow[d, "{\nu_{n,d}}"'] \arrow[rr, "g"]                                &  & \RP^n \arrow[d, "{\nu_{n,d}}"]     \\
{V_{n,d}} \arrow[d, hook] \arrow[rr, "{\rho(g)|_{V_{n,d}}}"] &  & {V_{n,d}} \arrow[d, hook] \\
\RP^{N -1}\arrow[rr, "\rho(g)"]                                       &  & \RP^{{N}-1}                
\end{tikzcd}
.\ee
Endowing $V_{n,d}$ with the induced Riemannian structure, all horizontal arrows are isometries. The only nontrivial fact in the previous diagram is that $\rho(g)$ maps $V_{n,d}$ to itself: viewing the action on $P_{n,d}$, then $V_{n,d}$ is the variety of $d$--th powers of linear forms, which is preserved under change of variables (it is indeed an orbit of this action). A consequence of this is the fact that the pull--back of the quotient metric on $\RP^{N-1}$ under $\nu_{n,d}$ can be explicitly computed and, up to multiples, it equals the quotient metric on $\RP^n.$


\begin{theorem}\label{thm:veronese}The Kostlan--Veronese embedding $\nu_{n,d}:\RP^n\to \RP^{N-1}$ is a dilation of factor $d^\frac{1}{2}$. In particular, for every submanifold $Z\hookrightarrow \RP^n$ of dimension $m$ we have:
$$ \mathrm{vol}(\nu_{n,d}(Z))=d^{\frac{m}{2}}\mathrm{vol}(Z).$$
\end{theorem}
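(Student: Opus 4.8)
The plan is to compute the pullback metric $\nu_{n,d}^*g_{\RP^{N-1}}$ directly and show it equals $d\cdot g_{\RP^n}$, from which the volume formula follows immediately by the change of variables formula for the $m$-dimensional volume density (a dilation of factor $\lambda$ on tangent spaces multiplies the $m$-dimensional volume by $\lambda^m$, here $\lambda = d^{1/2}$). The key structural observation, already established in the diagram \eqref{eq:commutative}, is that $O(n+1)$ acts transitively on $\RP^n$ by isometries and these isometries intertwine with isometries of $\RP^{N-1}$ preserving $V_{n,d}$; hence $\nu_{n,d}^*g_{\RP^{N-1}}$ is an $O(n+1)$-invariant metric on $\RP^n$. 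Since the isotropy representation of $O(n)$ on $T_{[e_0]}\RP^n$ is irreducible, any such invariant metric is a constant multiple of the round quotient metric, so it suffices to compute the dilation factor at a single point and in a single tangent direction.

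First I would work at the point $x_0 = [1:0:\cdots:0] \in \RP^n$, whose image is $\nu_{n,d}(x_0) = [\sigma_\alpha(x_0)]_{|\alpha|=d}$; only the monomial $x_0^d$ survives, and with the Kostlan normalization $\left(\frac{d!}{d!\,0!\cdots 0!}\right)^{1/2}=1$, so $\nu_{n,d}(x_0)$ is the coordinate point corresponding to $\alpha=(d,0,\ldots,0)$. Next I would pick a lift: parametrize a curve through $x_0$ by $t\mapsto (1, t, 0,\ldots,0)$ in the affine chart, compute $\nu_{n,d}$ of this curve in homogeneous coordinates, and differentiate at $t=0$. The only monomials contributing to first order in $t$ are $x_0^d$ (giving the basepoint, coefficient $1$) and $x_0^{d-1}x_1$ (with Kostlan coefficient $\left(\frac{d!}{(d-1)!\,1!}\right)^{1/2} = d^{1/2}$), so the velocity vector of the image curve in the relevant affine chart of $\RP^{N-1}$ has length $d^{1/2}$ while the velocity of the source curve has length $1$ at $t=0$. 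This gives the dilation factor $d^{1/2}$ in this direction; by $O(n+1)$-invariance and irreducibility of the isotropy action it holds in all directions and at all points. One should double-check that the quotient-metric norms are being compared correctly—i.e. that the Fubini–Study-type length of the tangent vector to the image curve really is $d^{1/2}$ times the round length of the source tangent vector, which amounts to the elementary computation $\frac{d}{dt}\big|_{t=0}[1 : d^{1/2}t : 0 : \cdots] $ having speed $d^{1/2}$ in $\RP^{N-1}$ against $\frac{d}{dt}\big|_{t=0}[1:t:0:\cdots]$ having speed $1$ in $\RP^n$.

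Finally, having established that $\nu_{n,d}$ multiplies all tangent lengths by $d^{1/2}$, the volume statement is immediate: for $j: Z\hookrightarrow \RP^n$ a submanifold of dimension $m$, the induced metric on $\nu_{n,d}(Z)$ is $d$ times the induced metric on $Z$, so the volume density scales by $d^{m/2}$ and $\mathrm{vol}(\nu_{n,d}(Z)) = d^{m/2}\mathrm{vol}(Z)$.

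The main obstacle I expect is bookkeeping with the quotient (projective) metric: one must be careful that the comparison of speeds is done after passing to the standard quotient metric on both $\RP^n$ and $\RP^{N-1}$ (equivalently, working with unit-norm lifts on the spheres and projecting orthogonally to the radial direction), rather than with the raw affine-chart coordinates. Once the normalization is pinned down, the rest is the elementary derivative computation and the invariance argument, which does the heavy lifting of reducing "one direction" to "all directions."
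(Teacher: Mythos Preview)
Your proposal is correct and follows essentially the same route as the paper: reduce to a single point and direction via the $O(n+1)$--equivariance recorded in \eqref{eq:commutative}, then compute the differential there using the monomials $x_0^d$ and $x_0^{d-1}x_1$. The only cosmetic differences are that the paper uses the unit--speed spherical curve $\gamma(t)=[\cos t,\sin t,0,\ldots,0]$ (which sidesteps your flagged bookkeeping issue by working directly with unit lifts) and reduces point and direction simultaneously by choosing $g\in O(n+1)$ with $gx=e_0$, $gv=e_1$, rather than invoking irreducibility of the isotropy representation.
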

\begin{proof}Let $[x]\in \RP^n$ and $v\in T_{[x]}\RP^n$; we need to prove that $\|(D_{[x]}\nu_{n,d})v\|=d^{1/2}\|v\|.$ Without loss of generality we assume that $\|v\|=1$; we also choose the representative for $[x]$ such that $x\in S^n$ and we identify $T_{[x]}\RP^n\simeq T_xS^{n}=\{x\}^\perp.$ Pick a rotation $g\in O(n+1)$ such that $g x=(1, 0, \ldots, 0)=e_0$ and $gv=(0,1,0, \ldots, 0)=e_1$. Then, using  the fact that horizontal arrows in \eqref{eq:commutative} are isometries, we easily get
$$ \|(D_{[x]}\nu_{n,d})v\|=\|(D_{[e_0]}\nu_{n,d})e_1\|.$$
For $t\in (-\epsilon, \epsilon)$ let $\gamma(t):=[\cos t, \sin t, 0, \ldots, 0]\in \RP^n$, so that $\gamma(0)=[e_0]$ and $\dot\gamma(0)=e_1$. Then, by an explicit computation we see that
$$(D_{[e_0]}\nu_{n,d})e_1=\frac{d}{dt}\left(\nu_{n,d}(\gamma(t))\right)\bigg|_{t=0}=\frac{d}{dt}[(\cos t)^d, \sqrt{d}(\cos t)^{d-1}\sin t, \ldots]\bigg|_{t=0}=(0, \sqrt{d}, 0, \ldots, 0).$$
This proves the statement.
\end{proof}
If one omits the scaling coefficients and considers the ``standard'' Veronese embedding, the invariance under the $O(n+1)$ action gets lost, and the expression for the pull-back metric is rather complicated. For instance, in the case $n=1$ the length of the standard rational normal curve of degree $d$, i.e. the image of $[x_0, x_1]\mapsto [x_0^d, x_0^{d-1}x_1, \ldots, x_0 x_1^{d-1}, x_1^d]$, is $\log d+O(1)$ \cite{EdelmanKostlan95}.

Let us now move to the probabilistic side and assume that the coefficients in \eqref{eq:sections} are independent standard Gaussian variables. If $M$ is of dimension $m$, the expectation of the number of common zeroes of $m$ independent random equations as in \eqref{eq:sections} can be computed using the Integral Geometry Formula and the two equivalent descriptions of the uniform distribution on  $\G(n-\ell, \ell)$ that we gave in \cref{sec:randomlinear}. If $(\xi_{i,j})\in \R^{n\times (\ell+1)}$ is a matrix filled with independent standard gaussians and $g\in O(n+1)$ is a random element from the uniform distribution, then:
$$ \left\{\sum_{k=0}^\ell \xi_{1,k}y_k=\dots=\sum_{k=0}^\ell \xi_{n,k}y_k=0\right\}\sim g\RP^{n-\ell}$$
(recall that for two random variables $\xi_1, \xi_2:P\to X$, the notation ``$\xi_1\sim \xi_2$'' means that they have the same distribution). 

As a consequence, reasoning as in the proof of  \cref{thm:BKK},
\be\label{eq:randomsums}\EE\#\left\{\sum_{k=0}^\ell \xi_{1,k}\sigma_k(x)=\dots=\sum_{k=0}^\ell \xi_{n,k}\sigma_k(x)=0\right\}=\EE\#\nu_{\mathcal{B}}(M)\cap g\RP^{\ell-n}=\frac{\mathrm{vol}(\nu_{\mathcal{B}}(M))}{\mathrm{vol}(\RP^n)},
\ee
where in the last identity we have used  \cref{thm:IGF}.

In the special case $M=\RP^n$ and $L=\mathcal{O}(d)$, taking random combinations of the elements from $\mathcal{B}_{n,d}$ with coefficients which are independent standard gaussians,  we get back random Bombieri--Weyl polynomials \eqref{eq:Kostlan1}.
As in \eqref{eq:transform}, the zero set of a polynomial, written as in \eqref{eq:Kostlan1}, is homeomorphic (and isometric, up to a factor $d^{\frac{n-1}{2}}$) to $\nu_{n,d}(\RP^n)\cap \xi^{\perp}$, where now $\xi^{\perp}$ is a uniform random hyperplane in $\RP^{\binom{n+d}{d}-1}$. Since $\mathrm{vol}(\nu_{n,d}(\RP^n))=d^{\frac{n}{2}}\mathrm{vol}(\RP^n)$ (\cref{thm:veronese}), \eqref{eq:randomsums} in this case tells that the expected number of common zeroes in $\RP^n$ of $n$ independent random Bombieri--Weyl polynomials of degree $d$ equals $d^{\frac{n}{2}}$. This result can be generalized to the case when the degrees are different.
 
\begin{theorem}[Edelman--Kostlan--Shub--Smale]\label{thm:EKSS}Let $p_1, \ldots, p_n$ be random, independent homogeneous polynomials of degrees $d_1, \ldots, d_n$ and Bombieri--Weyl distributed. Then
$$ \EE\#Z(p_1,\ldots, p_n)=\left(d_1\cdots d_n\right)^{\frac{1}{2}}.$$
\end{theorem}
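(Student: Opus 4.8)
The plan is to reduce the general-degree statement to the equal-degree case already handled in the text, by embedding each projective space into a single large projective space via the Kostlan--Veronese maps and then applying the Integral Geometry Formula \cref{thm:IGF}. First I would recall, as in \eqref{eq:transform}, that a random Bombieri--Weyl polynomial $p_i$ of degree $d_i$ has zero set isometric (up to an irrelevant scaling factor) to $V_{n,d_i}\cap \xi_i^\perp$, where $V_{n,d_i}=\nu_{n,d_i}(\RP^n)$ and $\xi_i^\perp$ is a \emph{uniform} random hyperplane in $\RP^{N_i-1}$, with $N_i=\binom{n+d_i}{d_i}$. The key point is that the family $\mathcal{B}_{n,d_i}$ has precisely the scaling coefficients that make the induced distribution on hyperplanes $O(n+1)$-invariant, hence uniform, so that \eqref{eq:randomsums} applies verbatim.

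Next I would set up the simultaneous picture. Intersecting the $n$ zero sets $Z(p_1),\dots,Z(p_n)$ inside $\RP^n$ is the same as intersecting $\RP^n$ with $n$ independent random hypersurfaces, one of each degree; tracking this through the diagram \eqref{eq:commutative} for each $d_i$ separately, the count $\#Z(p_1,\dots,p_n)$ equals the number of points of $\RP^n$ lying simultaneously on $n$ independent uniform random ``degree-$d_i$ hyperplanes''. Exactly as in the proof of \cref{thm:BKK} and in \eqref{eq:randomsums}, I would peel off one equation at a time: conditioning on the first $n-1$ random hypersurfaces, their common zero locus in $\RP^n$ is generically a curve (a one-dimensional submanifold), and the last random Bombieri--Weyl polynomial of degree $d_n$ cuts it in an expected number of points equal to $\mathrm{vol}(\nu_{n,d_n}(\cdot))/\mathrm{vol}(\RP^1)=d_n^{1/2}$ times the number of points obtained from the first $n-1$ equations, by \cref{thm:IGF} together with \cref{thm:veronese}. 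Iterating this argument (or, cleanly, applying \cref{thm:IGF} repeatedly to the successive intersections) multiplies the factors $d_n^{1/2}, d_{n-1}^{1/2},\dots, d_1^{1/2}$, giving $\EE\#Z(p_1,\dots,p_n)=(d_1\cdots d_n)^{1/2}$.

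I expect the main obstacle to be the bookkeeping in the iterated application of the Integral Geometry Formula: one must check that at each stage the partial intersection of the first $k$ random hypersurfaces is, almost surely, a smooth submanifold of the expected dimension $n-k$ (so that \cref{thm:IGF} applies with $A$ this submanifold and $B$ a random hyperplane section), and that the successive random hyperplanes are independent and uniform even after conditioning. This transversality-with-probability-one statement follows from Bertini-type arguments combined with the $O(n+1)$-invariance established around \eqref{eq:commutative}, but making it fully rigorous is where the real work lies; the rest is the multiplicativity already implicit in \eqref{eq:randomsums} and \cref{thm:veronese}. A slicker route that avoids some of this is to note that $\EE\#Z(p_1,\dots,p_n)$ is, by linearity of expectation over the successive conditionings, literally the product of the one-step expectations $d_i^{1/2}$ computed in the text, since each step only ever uses \cref{thm:IGF} for the pair (current intersection, random hyperplane).
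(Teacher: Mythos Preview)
Your approach is essentially the paper's: peel off one equation at a time using the Kostlan--Veronese embedding together with \cref{thm:IGF} and \cref{thm:veronese}. The paper phrases this as an induction proving the stronger statement
\[
\EE\,\mathrm{vol}(Z(p_1,\ldots,p_k))=(d_1\cdots d_k)^{1/2}\,\mathrm{vol}(\RP^{n-k}),\qquad k=1,\ldots,n,
\]
and then takes $k=n$.

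There is, however, a genuine slip in your peeling step. Conditioning on $p_1,\ldots,p_{n-1}$ and writing $C:=Z(p_1,\ldots,p_{n-1})$, the expected number of intersection points with $Z(p_n)$ is, by \cref{thm:IGF} and \cref{thm:veronese},
\[
\frac{\mathrm{vol}(\nu_{n,d_n}(C))}{\mathrm{vol}(\RP^1)}=d_n^{1/2}\,\frac{\mathrm{vol}(C)}{\mathrm{vol}(\RP^1)},
\]
not simply $d_n^{1/2}$. So what is passed to the next step of the iteration is not a ``number of points from the first $n-1$ equations'' but the \emph{expected length} $\EE\,\mathrm{vol}(C)$. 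The correct induction therefore tracks expected volumes of the successive intersections, and the factor gained at step $k$ is $d_k^{1/2}\cdot\mathrm{vol}(\RP^{n-k})/\mathrm{vol}(\RP^{n-k+1})$, not just $d_k^{1/2}$; the ratios of projective volumes then telescope to give $(d_1\cdots d_n)^{1/2}$. Once you rewrite your argument this way, it coincides with the paper's proof, and your remarks about a.s.\ transversality are exactly the measure-zero caveats the paper is implicitly assuming.
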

In the multi-degree case the proof of  \cref{thm:EKSS} uses the coarea formula -- or the Kac--Rice formula, which we will discuss later (see \cref{sec:KR}). I think that the simplest proof is the (apparently new\footnote{In the case $n=1$ the Integral Geometry approach is the one used in \cite{EdelmanKostlan95}; the case of multiple equations with different degrees is \cite[Theorem A]{Bez2}, proved using the so called ``double--fibration trick''.}) one we discuss here, which uses again the Integral Geometry Formula and  \cref{thm:veronese}.
\begin{proof} It is enough to show that for every $k=1, \ldots, n$ we have
\be\label{eq:Evol} \EE\mathrm{vol}(Z(p_1,\ldots, p_k))=\left(d_1\cdots d_k\right)^{\frac{1}{2}}\mathrm{vol}(\RP^{n-k}),\ee
and  \cref{thm:EKSS} follows with $k=n$.
We prove it by induction on $k\geq 1$. The base of the induction is given by the Integral Geometry Formula, in the space $\RP^{\binom{n+d}{d}-1}$ with the action of the group $G=O(\binom{n+d}{d})
$, and  \cref{thm:veronese}. For the inductive step one uses the independence of $p_1, \ldots, p_k$ and proceeds as follows. Using \cref{thm:veronese},
\begin{align*}\EE\mathrm{vol}(Z(p_1, \ldots, p_k))&=\underset{p_1, \ldots, p_{k-1}}{\EE}\int_{G}\mathrm{vol}\left(\nu_{d_k,n}^{-1}\left(\nu_{d_k,n}(Z(p_1,\ldots, p_{k-1}))\cap g\RP^{\binom{n+d_k}{d_k}-1}\right)\right)\mathrm{d}g\\
&=\underset{p_1, \ldots, p_{k-1}}{\EE}\int_{G}d_k^{-\frac{n-k}{2}}\mathrm{vol}\left(\nu_{d_k,n}(Z(p_1,\ldots, p_{k-1}))\cap g\RP^{\binom{n+d_k}{d_k}-1}\right)\mathrm{d}g\\
&=:(*)
\end{align*}
By the Integral Geometry Formula in $\RP^{\binom{n+d_k}{d_k}}$,
\begin{align*}
(*)&=\underset{p_1, \ldots, p_{k-1}}{\EE}d_k^{-\frac{n-k}{2}}\mathrm{vol}(\RP^{n-k})\frac{\mathrm{vol}\left(\nu_{d_k,n}(Z(p_1,\ldots, p_{k-1}))\right)}{\mathrm{vol}(\RP^{n-k+1})}\\
&=d_k^{-\frac{n-k}{2}}\frac{\mathrm{vol}(\RP^{n-k})}{\mathrm{vol}(\RP^{n-k+1})}\underset{p_1, \ldots, p_{k-1}}{\EE}\mathrm{vol}\left(\nu_{d_k,n}(Z(p_1,\ldots, p_{k-1}))\right)=:(**)\end{align*}
Finally, using again \cref{thm:veronese}, we get
\begin{align*}
(**)&=d_k^{-\frac{n-k}{2}}\frac{\mathrm{vol}(\RP^{n-k})}{\mathrm{vol}(\RP^{n-k+1})}\underset{p_1, \ldots, p_{k-1}}{\EE}d_k^{\frac{n-k+1}{2}}\mathrm{vol}\left(Z(p_1,\ldots, p_{k-1})\right)\\
&=d_k^{\frac{1}{2}}\frac{\mathrm{vol}(\RP^{n-k})}{\mathrm{vol}(\RP^{n-k+1})}\underset{p_1, \ldots, p_{k-1}}{\EE}\mathrm{vol}\left(Z(p_1,\ldots, p_{k-1})\right)=:(***),
\end{align*}
and by the inductive hypothesis,
$$(***)=\left(d_1\cdots d_k\right)^{\frac{1}{2}}\mathrm{vol}(\RP^{n-k}).$$
(Admittedly, these steps are hard to follow, but it is instructive to analyze them.)
\end{proof}
Notice that the same reasoning, together with the fact that $\mathrm{vol}(\nu_{n,d}(\CP^n))=d^n\mathrm{vol}(\CP^n)$, gives B\'ezout's Theorem (for a generic system). Keeping this in mind, \cref{thm:EKSS} tells that expected number of real zeroes of a random system of Kostlan polynomials equals the square--root of the number of generic complex solutions. 
\begin{remark}[Variances, moments and central limits]From a probabilist's point of view, \cref{thm:EKSS} is just the starting point for the understanding of the random variable \be\#_{k, n, d}:=\mathrm{vol}_{n-k}(Z(p_1, \ldots, p_k)),\ee
where $p_1, \ldots, p_k$ are independent Kostlan polynomials on $\RP^n$ (notice that, for $k\leq n$, the expectation of this random variable is given by \eqref{eq:Evol}). In the case $k=n=1$ (a single polynomial in one variable), F. Dalmao \cite{Dalmao} proved that the variance $\EE(\#_{1,1,d}-\sqrt{d})^2$ behaves asymptotically, when $d\to \infty$, as $\sigma^2 \sqrt{d}$, and used this result to deduce a Central Limit Theorem for $\#_{1,1,d}$. The main tools are the Kac--Rice formula (see \cref{propo:KR} below), Kratz–Le\'on’s version of the chaotic expansion of a random variable \cite{KratzLeon} and the Fourth Moment Theorem (this method was previously applied to the case of classical random trigonometric polynomials by J.-M. Aza\"is, F. Dalmao and J. R. Le\'on \cite{ADL}). Using different techniques, M. Ancona and T. Letendre \cite{AnconaLetendre} computed all the first $p$--th central moments $m_p(\#_{1,1,d})$ proving that, as $d\to \infty$:
$$m_p(\#_{1,1,d})= \mu_p\sigma^pd^{\frac{p}{4}} +o(d^{\frac{p}{4}}),$$
where $\mu_p$ are the moments of the standard gaussian distribution and $\sigma$ is the same constant as in Dalmao's result. Using this result, Ancona and Letendre proved a strong law of large numbers and recover Dalmao's Central Limit Theorem using the method of moments\footnote{In fact in \cite{AnconaLetendre} they study more generally the real zeros of random real sections of positive Hermitian line bundles over real algebraic curves.}. The estimate for the variance of $\#_{k,n, d}$ as $d\to \infty$ in the case $k<n$ was proved by Letendre \cite{Letendre}. The asymptotic variance and the Central Limit Theorem for all $(k,n)$ (including square systems $k=n$) was proved in a sequence of papers by Armentano,  Aza\"is, Dalmao and  L\'eon \cite{AADL, AADL2, AADL3}.

\end{remark}
\subsection{Probabilistic Intersection Theory}\label{sec:Schubert}
If we compare \cref{thm:EKSS} with the classical B\'ezout Theorem, we notice that the number $\sqrt{d_1\cdots d_n}$ cannot be obtained by means of cohomological operations -- as it happens instead for the number $d_1\cdots d_n$, which can be obtained by computing in the cohomology ring of the projective space. In this section I want to explain how to still interpret the probabilistic computation in an appropriate ring, as proposed by P. B\"urgisser and myself  in \cite{PSC}, and further explored by the two of us together with P. Breiding and L. Mathis in \cite{BBLM}, shifting the point of view from ``generic'' to ``random'' in the context of intersection theory. 

The elements of this ring are convex bodies of a special type, called \emph{zonoids}: they are Hausdorff limits of Minkowski sums of segments \cite{bible}. Given a vector space $V$, we denote by $\mathcal{Z}(V)$ the set of (formal differences of) zonoids in $V$, which we center at the origin, and by
$$\mathcal{A}(V):=\bigoplus_{k=0}^{\dim(V)}\mathcal{Z}(\Lambda^k V).$$
In \cite{BBLM} we proved that it is possible to endow $\mathcal{A}(V)$ with a ring structure that turns it into a graded, commutative algebra, which we called the \emph{zonoid algebra}. We called the product operation the ``wedge'' and denoted it by ``$\wedge$''. If $K_1$ and $K_2$ are zonoids, one should think of $K_1\wedge K_2$ as convex--body analogue of the wedge of differential forms (see \cite[Theorem 4.1]{BBLM} for more insights). The ring operation enjoys some interesting properties, allowing to reinterpret classical operations on convex bodies. For instance, if $K_1, \ldots, K_m\in \mathcal{Z}(\R^m)$, then $K_1\wedge\cdots\wedge K_m$ is a segment of length $\mathrm{MV}(K_1, \ldots, K_m)$ (the mixed volume).

Let now $\ell_0\in G(k,n)$ be a fixed point and identify $T_{\ell_0}G(k,n)\simeq \R^k\otimes \R^{n-k}$. Now we associate to every nice\footnote{We do not need $\Omega$ to be a smooth submanifold. For instance a submanifold--complex with finite volume is enough.} submanifold  $\Omega\subseteq G(k,n)$ a zonoid
$$K_\Omega\subset \Lambda^{\mathrm{codim}(\Omega)}(\R^k\otimes \R^{n-k}).$$
If $\Omega=\{\ell\}$ is a point (a submanifold whose codimension is equal to the dimension of $G(k,n)$) we define $K_{\{\ell\}}$ to be the centered segment in $\Lambda^{k(n-k)}(\R^k\otimes \R^{n-k})\simeq \R$ of length $\mathrm{vol}(M)^{-1}.$ Let me also explain how to build $K_{\Omega}$ when $\Omega$ is a hypersurface. First, for every $x\in \Omega$ we denote by $s(x)\subset (T_x\Omega)^\perp\simeq \R$ the unit centered segment. Using the group action, we may assume that all the segments $\{s(x)\}_{x\in \Omega}$ lie in $T_{\ell_0}G(k,n).$ Now we define
$$K_\Omega:=\int_{O(n)\times O(n-k)}\int_{\Omega}h\cdot s(x)\mathrm{d}x\mathrm{d} h,$$
where ``$\mathrm{d}x$''  and ``$\mathrm{d}h$'' denote integration with respect to the uniform measure on $\Omega$ and $O(k)\times O(n-k)$, respectively.
In other words, we ``average'' these segments over the group action of the stabilizer and over the manifold $\Omega$ (the reasons why this construction makes sense is because the integral is a limit of sums). The construction when $\mathrm{codim}(\Omega)>1$ is similar and involves choosing a unit normal parallelogram (see Leo Mathis' PhD thesis \cite{Leothesis} for more details on the construction).

This map, from submanifolds to zonoids, has the property that, if $\Omega_1, \ldots, \Omega_\nu$ are submanifolds with $\mathrm{codim}(\Omega_i)=c_i$ and such that $c_1+\cdots +c_\nu=m$, then, by \cite[Theorem 4.1.4]{Leothesis},
\be\label{eq:schubert3}K_{\Omega_1}\wedge \cdots \wedge K_{\Omega_\nu}=\left(\int_{O(n)^\nu}\# \left(g_1\Omega_1\cap \cdots \cap g_\nu\Omega_1\right)\mathrm{d}g_1\cdots \mathrm{d}g_\nu\right) \cdot K_{\{\ell_0\}}.\ee
Computing in $\mathcal{A}(V)$ one can get average intersection numbers (notice the analogy with classical intersection theory, with which this construction shares the main features).

\begin{example}In a similar way as classical B\'ezout Theorem can be formulated as an identity in $H^{*}(\CP^n)$, \cref{thm:EKSS} can be formulated instead as an identity in the zonoid algebra. If $\Omega\subset \RP^n$ is a \emph{random} Bombieri--Weyl hypersurface of degree $d$, the associated zonoid is 
$$K_{\Omega}=\frac{\sqrt{d}}{2\pi}B^n,$$ 
where $B\subset \R^n$ denotes the unit ball. Therefore, if $\Omega_1, \ldots, \Omega_n\subset \RP^n$ are random, independent and Bombieri--Weyl distributed of  degree $d_1, \ldots, d_n$, we have 
 $$\EE\left( \#\Omega_1\cap\cdots\cap \Omega_n\right)\cdot K_{[e_0]}=K_{\Omega_1}\wedge\cdots \wedge K_{\Omega_n}=\frac{\sqrt{d_1}}{2\pi}B\wedge\cdots \wedge \frac{\sqrt{d_n}}{2\pi}B=\sqrt{d_1\cdots d_n} \cdot K_{[e_0]},$$
since $B\wedge\cdots \wedge B$ is a segment of length $\mathrm{MV}(B, \ldots, B)=n!\mathrm{vol}(B)$ (here $[e_0]=[1, 0, \ldots,0]$ is a point in $\RP^n$). This is the ring--theoretical interpretation of \cref{thm:EKSS}.
\end{example}

\subsubsection{Probabilistic Schubert Calculus}In general, integrals like \eqref{eq:schubert3} can be used to define what in  \cite{PSC} we called ``Probabilistic Schubert Calculus''. For example, consider the probabilistic Schubert problem of counting the expectation of the number of lines  hitting $2n-2$ random copies of $\RP^{n-2}$ inside $\RP^n$. When $n=3$ this is the problem fo computing the expectation of the number of real lines hitting four random lines in $\RP^3$ (generically the number of complex lines is two, but over the Reals it can be either zero or two).

Since the uniform distribution on the Grassmannian is induced by the orthogonal group, this corresponds to intersecting $2n-2$ random copies of the Schubert variety $\Omega\subset \mathbb{G}(1,n)$ of lines hitting a fixed $\RP^{n-2}$, where the randomness comes by acting on each of these copies with a different copy of the orthogonal group. In \cite{PSC} we computed the asymptotic of the expected number of real solutions to this problem, as $n\to \infty$:
\be \label{eq:PSCas}\EE\#\left(\Omega\cap \cdots\cap \Omega\right)=(2n-2)!\mathrm{vol}\left(K_{\Omega}\right)=\frac{8}{3\pi^{\frac{5}{2}}\sqrt{n}}\left(\frac{\pi^2}{4}\right)^n\cdot \left(1+O(n^{-1})\right).\ee
Here the convex body $K_\Omega=\frac{1}{4\pi}B^2\otimes B^{n-2}$ is the tensor product of balls in the space of matrices $ \R^2\otimes  \R^{n-2}$, a convex body whose support function depends only on the singular value of the matrix, and \eqref{eq:PSCas} follows from the asymptotic computation of its volume. We do not know how to exactly compute these integrals in general, but using techniques from convex geometry we are often able to estimate their asymptotic behavior, see \cite{PSC2}.

\begin{remark}Using different techniques (a vector bundle version of \cref{propo:KR}), one can also formulate random Chern-classes computations. For instance, together with S. Basu, E. Lundberg and C. Peterson \cite{BLLP}, we computed the asymptotic of the expected number $E_n$ of real lines on a random, Bombieri--Weyl hypersurface of degree $(2n-3)$ in $\RP^n$ (when $n=3$ this is the classical question of counting lines on a cubic surface).
Denoting by $C_n$ the  number of complex lines on a generic hypersurface of degree $2n-3$ in $\CP^n$ , we showed that 
$$\lim_{n\to \infty}\frac{\log E_n}{\log C_n}=\frac{1}{2}.$$
(By the way, the expected number of real lines on a random cubic surface is $6\sqrt{2}-3$, see \cite{BLLP, AEMBM}). The problem of counting lines on hypersurfaces is quite interesting, as one can also define a generic real count, involving signs \cite{finkh, OkTel}.  This number is the Euler number of some vector bundle, which in this specific case equals $(2n-3)!!$.
\end{remark}
\section{Topology of random hypersurfaces}\label{sec:topology}
In this section I would like to discuss a probabilistic version of the first part of Hilbert's Sixteenth problem, formulated by D. Hilbert at the ICM in Paris in 1900 \cite{H16trans}:

\emph{``The upper bound of closed and separate branches of an algebraic curve of degree $d$ was decided by Harnack [...]. It seems to me that a thorough investigation of the relative positions of the upper bound for separate branches is of great interest, and similarly the corresponding investigation of the number, shape and position of the sheets of an algebraic surface in space.''}

 We can interpret the original formulation of the problem in modern language as the study of the Betti numbers of a nonsingular hypersurface of degree $d$ in $\RP^n$ and the way this hypersurface embeds in projective space. I refer the reader to \cite{Wilson} for an overview on this problem. 

Denoting by $P_{n,d}$ the space of homogenous polynomials of degree $d$ and by $\Sigma_{n,d}\subset P_{n,d}$ the set of polynomials whose real zero set is singular, the set $P_{n,d}\setminus \Sigma_{n,d}$ consists of several open connected components, which we called chambers:
$$P_{n,d}\setminus \Sigma_{n,d}=\bigsqcup_{\tau}C_\tau.$$
By \cref{thm:disc}, for every chamber $C_\tau$ there exists a smooth hypersurface $Z_\tau$ such that for every $p\in C_\tau$ we have $(\RP^n, Z(p))\simeq (\RP^n, Z_\tau).$ In particular, the function $p\mapsto b(Z(p))$ is constant on each chamber. Again by \cref{thm:disc}, this function is bounded and the maximum value that it attains is estimated by Thom--Milnor bound\cite{Milnor}:
$$ b(Z(p))\leq (2d)^n.$$
For what concerns the ``arrangement'' of the components of the zero set of $p$, less is known (even the precise formulation of the problem is more delicate). For example, the zero set of a curve of degree $d$ in the plane cannot contain a sequence of too many nested ovals, one inside the other\footnote{At most it can have $\lfloor\frac{d}{2}\rfloor$ such ovals; curves of this type are called \emph{maximally nested}.}. What is clear after \cite{OreKha} is that the number of possible pairs $(\RP^n, Z(p))$ up to diffeomorphisms grows exponentially fast as $d\to \infty$ and this fact rules out the possibility of a case--by--case study.

A new perspective comes into the picture if we endow the space of polynomials with a probability distribution. Then, we can ask for the same questions but for  ``random'' hypersurfaces. For instance, we can wonder about the expectation of the function $p\mapsto Z(p)$ or, even more, we can try to understand if in the space of polynomials there are some special chambers which have more probability than others. 

For the rest of this section ``random'' will always mean with respect to the Bombieri--Weyl distribution. In this case, the first problem (computing $\EE b(Z(p))$) was studied by D. Gayet and J.--Y. Welschinger in a sequence of papers \cite{GaWe0, GaWe1, GaWe2}. The second problem (detecting whether there are special chambers that have more probability than others) by D. N. Diatta and myself in \cite{DiattaLerario}, where we showed that ``most'' hypersurfaces of degree $d$ are isotopic to hypersurfaces of degree $O(\sqrt{d\log d})$. I will try now to give an account of these results.
\subsection{The Betti numbers of a random algebraic hypersurface}\label{sec:averagebetti}The very first result involving topological invariants of a random hypersurface $Z(p)\subset \RP^n$ was proved by S. S. Podkorytov \cite{Po}, who used an integral representation of the Euler characteristic to prove a closed formula for the expectation of $\chi(Z(p))$, yielding for $n$ odd\footnote{If $n$ is even, $Z(p)\subseteq \RP^n$ is with probability one an odd--dimensional manifold, therefore with zero Euler characteristic.}:
$$\EE \chi(Z(p))=\frac{\Gamma\left(\frac{n}{2}\right)}{\sqrt{\pi}\Gamma\left(\frac{n+1}{2}\right)}d^{\frac{n}{2}}+O\left(d^{\frac{n}{2}-1}\right).$$
Using different techniques (related to the ideas discussed in \cref{sec:veronese}) P. B\"urguisser \cite{Bu} generalized this result to the case of random complete intersections, giving also an explicit formula for the expectation of the curvature polynomial of $Z(p)$ in $\RP^n$. 

The work of F. Nazarov and M. Sodin \cite{NazarovSodin1}, on the nodal components of a random spherical harmonic, introduced a new technique in the subject (what today is called the ``barrier method'', which I will discuss in this context in \cref{sec:sqrt}). In fact I discovered the whole field thanks to P. Sarnak's handwritten letter \cite{Sarnak}, which points out the relevance of \cite{NazarovSodin1} for the emerging field of ``random algebraic geometry''. I really recommend the reader to read this letter. 

The goal of this section is to explain the ideas behind the proof of the following result, proved in  \cite{GaWe1, GaWe2}. The setting from  \cite{GaWe1, GaWe2} is more general, and the authors deal with random hypersurfaces of a smooth real projective manifold, but the case of $\RP^n$ already contains the main features.
\begin{theorem}[Gayet--Welschinger]\label{thm:GaWe}There exist $c_{1,n}, c_{2,n}>0$ such that if $p\in\R[x_0, \ldots, x_n]_{(d)}$ is a random Bombieri--Weyl polynomial, then
\be\label{eq:bettiaverage}c_{1, n}d^\frac{n}{2}\leq \EE b(Z(p))\leq c_{2,n} d^\frac{n}{2}.\ee
\end{theorem}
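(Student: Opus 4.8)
The two bounds in \eqref{eq:bettiaverage} are very different in nature and require separate treatments. The \emph{upper bound} is the ``easy'' direction: I would bound $b(Z(p))$ pointwise by a quantity that is accessible in expectation. The cleanest route is via Morse theory applied to a generic linear projection -- equivalently, one estimates $b(Z(p))$ by the number of critical points of a random linear function restricted to $Z(p)$, plus lower-order contributions from the ``apparent contour''. Each such critical-point count is a zero count of an auxiliary random polynomial system, so its expectation is governed by a Kac--Rice / integral-geometry computation of the type behind \cref{thm:EKSS}, and scales like $d^{n/2}$ up to a dimensional constant. (Alternatively, and more in the spirit of \cref{sec:veronese}: realize $Z(p)$ as $\nu_{n,d}(\RP^n)\cap \xi^\perp$ for a uniformly random hyperplane $\xi^\perp$, and bound the total Betti number of a hyperplane section of the fixed variety $V_{n,d}$ by the degree of its dual variety / by a Chern-class integral; averaging over $\xi$ again produces $c_{2,n}d^{n/2}$.) The key numerical input throughout is \cref{thm:veronese}: the Kostlan--Veronese embedding is a dilation by $\sqrt d$, so every $m$-dimensional volume picks up a factor $d^{m/2}$, which is exactly the source of the exponent $n/2$.

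\textbf{The lower bound: the barrier method.} This is the substantive half and cannot be done by a pointwise estimate, since a single polynomial can have trivial topology. The idea, following Nazarov--Sodin and adapted to this setting by Gayet--Welschinger (see \cref{sec:sqrt}), is to produce \emph{many disjoint regions}, each of which independently has a positive probability of containing a sphere-like component of $Z(p)$. Concretely: fix a ``barrier function'' $q$, a fixed polynomial (or local model) of degree $d$ whose zero set, inside a ball of radius $\sim d^{-1/2}$ on the sphere $S^n$, contains a component isotopic to a sphere and is moreover \emph{stable}, i.e. any polynomial $C^0$-close to $q$ on that ball (after rescaling) has the same local picture. The natural length scale is $d^{-1/2}$ because, by \cref{thm:veronese}, that is the scale at which a degree-$d$ Kostlan polynomial, suitably normalized, converges to a universal local limiting field (in dimension one, a Bargmann--Fock-type Gaussian field). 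One then tiles $\RP^n$ by $\Theta(d^{n/2})$ disjoint balls of radius $\sim d^{-1/2}$; on each ball the (rescaled, conditioned) Kostlan field looks like the universal limit, so the event ``$p$ restricted to this ball is close enough to the barrier $q$ to force an extra component'' has probability bounded below by a constant $p_0>0$ depending only on $n$. Summing these indicator contributions to $b_0(Z(p))$ (they are disjoint, hence genuinely additive) gives $\EE b(Z(p)) \ge \EE b_0(Z(p)) \ge p_0 \cdot \Theta(d^{n/2}) = c_{1,n} d^{n/2}$.

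\textbf{Where the difficulty lies.} The upper bound is essentially bookkeeping once the Kac--Rice/integral-geometry machinery is in place; the real work is in the lower bound, and specifically in two points. First, one must show the existence of a genuine \emph{stable} barrier: a local configuration of the zero set that survives $C^0$-perturbations and contributes to topology -- this is a concrete but non-trivial piece of real algebraic geometry, and it is where the constant $c_{1,n}$ acquires its (terrible, non-explicit) dependence on $n$. Second, one must control the \emph{probability} that the random Kostlan polynomial, restricted and rescaled to a small ball, is uniformly close to the barrier: this requires quantitative estimates on the covariance kernel of the Kostlan ensemble at scale $d^{-1/2}$ (convergence to the universal limit, together with enough regularity -- $C^1$ bounds via Kolmogorov-type inequalities -- to pass from $C^0$-closeness of fields to isotopy of zero sets), plus a uniform-over-the-sphere version so that \emph{all} $\Theta(d^{n/2})$ balls contribute simultaneously. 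Making the ``independence'' heuristic honest -- the events on different balls are not truly independent, but the additivity of $b_0$ over disjoint open sets rescues the argument by linearity of expectation, so one does not in fact need independence, only disjointness -- is the conceptual crux that makes the whole scheme work.
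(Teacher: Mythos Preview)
Your proposal is correct and matches the paper's approach closely: the upper bound via Morse theory plus Kac--Rice (the paper fixes a Morse function $g$ on $\RP^n$, bounds $b(Z(p))$ by $\#\mathrm{crit}(g|_{Z(p)})$, and evaluates the Kac--Rice density using $O(n+1)$--invariance to obtain a GOE determinant integral times $d^{n/2}$), and the lower bound via the Nazarov--Sodin barrier method with $\Theta(d^{n/2})$ disjoint disks of radius $\sim d^{-1/2}$, a uniform positive lower bound on the probability that the rescaled field lands in a prescribed open set (the paper isolates this as \cref{propo:limitBW}), and linearity of expectation over the disks. One small slip worth fixing: stability of the barrier configuration requires $C^1$--closeness, not $C^0$ --- the set $U_Z=\{f:(D,Z(f))\simeq(D,Z)\}$ is open for the $C^1$--topology, not the $C^0$--topology --- and indeed you acknowledge this later when you invoke ``$C^1$ bounds via Kolmogorov-type inequalities'', so just correct the first formulation of the barrier condition accordingly.
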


\subsubsection{The upper bound and the Kac--Rice formula}\label{sec:KR}
The main idea for the proof of the upper bound in \eqref{eq:bettiaverage} is to use a \emph{random} version of Morse theory. To be more precise: fix a Morse function $g:\RP^n\to \R$ and for every polynomial $p\in P_{n,d}\setminus \Sigma_{n,d}$ consider the restriction 
$$g|_{Z(p)}:Z(p)\to \R.$$ Clearly this is a smooth function on $Z(p)$. It is not difficult to show that, except for those $p$ belonging to a zero--measure set, and for large $d>0$, the function $g|_{Z(p)}$ is also Morse. In particular, by Morse inequalities \cite[Theorem 5.2]{MilnorMorse},
$$\EE\#b(Z(p))\leq \EE\#\mathrm{crit}(g|_{Z(p)}).$$
Observe that the set $\mathrm{crit}(g|_{Z(p)})$ is described by a system of random equations:
\be\label{eq:reduce}\textrm{crit}(g|_{Z(p)})=\{x\in \RP^n\,|\, p(x)=0\quad \textrm{and}\quad T_xZ(p)\subset \ker d_xg\}.\ee
The problem reduces therefore to estimate the expectation of the cardinality of the set of solutions of a system of random equations. How do we proceed from here?

The tool to use in these cases is the so--called \emph{Kac--Rice formula}, introduced first by M. Kac \cite{kac43} and S. O. Rice \cite{Rice} for the study of the number of zeros of random functions. In order to formulate the statement, let $U\subseteq \R^n$ and $G:U\to \R^n$ be a ``random map''. In the cases we will be interested in, each component of $G$ will be a gaussian combination of some fixed smooth functions, as in \eqref{eq:sections}, but let me be loose on this point and explain instead the main ingredients. For every $y\in \R^n$ we have a random variable $(G(y), JG(y))\in \R^n\times \R^{n\times n}$ obtained by simply evaluating at $y\in U$ the map $G$ and its Jacobian. We assume that this random variable has a smooth density, which we denote by $h(\cdot, \cdot, y):\R^n\times \R^{n\times n}\to \R$
$$ \PP\{(G(y), JG(y))\in A\}=\int_{A} h(v, w, y) \mathrm{d}v\mathrm{d}w.$$
We also assume that $0$ is a regular value of $G$ with probability one. The precise hypotheses in the statement of next theorem can  be found in \cite[Theorem 11.2.1]{AdlerTaylor}. 
\begin{theorem}[Kac--Rice formula]\label{propo:KR} Let $G:U\to \R^n$ be a random map as above. Then for every Borel set $V\subseteq U$
\be \EE\#\left(G^{-1}(0)\cap V\right)=\int_{V}\int_{\R^{n\times n}}|\det(w)|h(0, w,y)\mathrm{d}w \mathrm{d}y.\ee
\end{theorem}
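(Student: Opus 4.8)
\textbf{Proof proposal for the Kac--Rice formula (\cref{propo:KR}).}

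The plan is to derive the formula from the co-area formula together with a disintegration (conditioning) of the Gaussian-type density. First I would fix a Borel set $V\subseteq U$ and, for $\eps>0$, approximate the counting measure $\#(G^{-1}(0)\cap V)$ by the ``thickened'' quantity $\frac{1}{\vol(B_\eps)}\vol\left(G^{-1}(B_\eps)\cap V\right)$, where $B_\eps\subset \R^n$ is the ball of radius $\eps$. Since $0$ is almost surely a regular value, for each fixed realization of $G$ the set $G^{-1}(0)\cap V$ is a finite collection of points at which $JG$ is invertible, and by the inverse function theorem $G^{-1}(B_\eps)\cap V$ is, for $\eps$ small, a disjoint union of small neighbourhoods whose volume is $\vol(B_\eps)\cdot|\det JG(y)|^{-1}(1+o(1))$ around each solution $y$. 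Hence, pointwise in $\omega$,
$$ \lim_{\eps\to 0}\frac{1}{\vol(B_\eps)}\vol\left(G^{-1}(B_\eps)\cap V\right)=\sum_{y\in G^{-1}(0)\cap V}\frac{1}{|\det JG(y)|}\cdot|\det JG(y)| = \#\left(G^{-1}(0)\cap V\right), $$
where I have written the right-hand side suggestively to anticipate the Jacobian cancellation that will reappear after the co-area step; more honestly the limit is simply $\#(G^{-1}(0)\cap V)$. The next step is to take expectations and exchange the limit with the expectation (dominated convergence, using a uniform integrable bound that follows from the smoothness and non-degeneracy hypotheses quoted from \cite[Theorem 11.2.1]{AdlerTaylor}).

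Now I would compute $\EE\,\vol\left(G^{-1}(B_\eps)\cap V\right)$ directly. By Fubini,
$$ \EE\,\vol\left(G^{-1}(B_\eps)\cap V\right)=\int_V \PP\{G(y)\in B_\eps\}\,\dd y=\int_V\int_{B_\eps}\left(\int_{\R^{n\times n}}h(v,w,y)\,\dd w\right)\dd v\,\dd y. $$
Dividing by $\vol(B_\eps)$ and letting $\eps\to 0$, the inner average over $v\in B_\eps$ converges to the value at $v=0$, giving $\int_V\int_{\R^{n\times n}}h(0,w,y)\,\dd w\,\dd y$. This, however, is not yet the claimed formula: it is missing the weight $|\det w|$. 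The point I glossed over is that the naive volume estimate around each solution already carries the factor $|\det JG(y)|^{-1}$, so one must instead apply the \emph{co-area formula} to $G$ restricted to $V$: $\vol\left(G^{-1}(B_\eps)\cap V\right)=\int_{B_\eps}\left(\int_{G^{-1}(v)\cap V}\frac{\dd\mathcal H^{0}}{|\det JG|}\right)\dd v$ is not quite right dimensionally either, so the clean route is: write $\#(G^{-1}(0)\cap V)=\lim_{\eps\to0}\frac{1}{\vol B_\eps}\int_V \mathbbm 1_{B_\eps}(G(y))\,|\det JG(y)|\,\dd y$, which holds pointwise because near each non-degenerate zero $G$ is a diffeomorphism and the change of variables $v=G(y)$ contributes exactly $|\det JG(y)|\,\dd y = \dd v$, so the integral counts $\vol(B_\eps)$ per solution. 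Taking expectations and using Fubini,
$$ \EE\#\left(G^{-1}(0)\cap V\right)=\lim_{\eps\to0}\frac{1}{\vol B_\eps}\int_V\int_{B_\eps}\int_{\R^{n\times n}}|\det w|\,h(v,w,y)\,\dd w\,\dd v\,\dd y, $$
and continuity of $v\mapsto h(v,w,y)$ (with the integrable domination) lets the $v$-average collapse to $v=0$, yielding the stated identity.

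The main obstacle is the interchange of limits and integrals: one must justify both the pointwise $\eps\to0$ convergence of $\frac{1}{\vol B_\eps}\int_V\mathbbm 1_{B_\eps}(G(y))|\det JG(y)|\,\dd y$ to the counting function (uniform non-degeneracy of $JG$ at the zeros, plus a bound on the number of zeros) and the dominated-convergence step under $\EE$, which requires an integrable majorant uniform in $\eps$; this is exactly where the technical hypotheses of \cite[Theorem 11.2.1]{AdlerTaylor} (boundedness of $V$, smoothness and non-degeneracy of the joint density $h$, and a moment bound on the number of solutions) are used. A secondary subtlety is making precise that ``$0$ is a regular value with probability one'' so that $G^{-1}(0)\cap V$ is almost surely finite and $JG$ is almost surely invertible there — in our applications this follows from the explicit Gaussian structure of the components of $G$. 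Everything else — the co-area change of variables and the Fubini exchange — is routine.
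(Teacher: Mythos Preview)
The paper does not actually prove \cref{propo:KR}: it simply states the result and refers the reader to \cite[Theorem 11.2.1]{AdlerTaylor} for the precise hypotheses and the argument. So there is nothing in the paper to compare your proposal against, beyond the citation.

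That said, your final approach --- writing
\[
\#\bigl(G^{-1}(0)\cap V\bigr)=\lim_{\eps\to0}\frac{1}{\vol B_\eps}\int_V \mathbbm 1_{B_\eps}(G(y))\,|\det JG(y)|\,\dd y
\]
pointwise (via the local change of variables $v=G(y)$ near each non-degenerate zero), then taking expectations, applying Fubini, and collapsing the $v$--average by continuity of $h$ --- is indeed the classical route and is correct in outline. It is essentially the argument one finds in the Adler--Taylor or Aza\"is--Wschebor treatments. You also correctly identify the two genuine technical points: the almost-sure regularity of $0$ as a value of $G$, and the domination needed to pass the limit through $\EE$.

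What is less satisfactory is the presentation: the first two-thirds of your write-up consists of two false starts (the naive volume-of-preimage approach, then a dimensionally muddled co-area attempt) that you yourself abandon. If this is meant to be a proof rather than a narrative of discovery, you should delete those and begin directly with the correct pointwise identity above; the argument then fits in a short paragraph. One further small point: for the pointwise limit to hold you also need that, for $\eps$ small, $G^{-1}(B_\eps)\cap V$ contains no points other than the small neighbourhoods of the zeros --- this uses compactness (or at least that $V$ has compact closure in $U$), which is among the hypotheses you should make explicit.
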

The function $\rho(y):=\int_{\R^{n\times n}}|\det(w)|h(0, w,y)\mathrm{d}w$ is called the \emph{Kac--Rice density} of the zeroes of $G$, since the expectation of the numbers of zeroes of $G$ in $V\subseteq U$ is written as an integral of $\rho$ on $V$.
It is difficult to underestimate the impact of this formula on the development of the theory: essentially, every time that we are able to reduce the problem to counting ``points'' we are in the position of using it. 

This is precisely what is happening in \eqref{eq:reduce}. First, after passing to local coordinates, one can reduce the problem to \cref{propo:KR} and show that there exists a Kac--Rice density $\rho_{n,d}:\R^n\to \R$ for the critical points of $g|_{Z(p)}$:
$$ \label{eq:krgw}\EE \#\mathrm{crit}(g|_{Z(p)})=\int_{\RP^n} \rho_{n, d}(x)\mathrm{vol}_{\RP^n}(\mathrm{d} x).$$
Using the $O(n+1)$--invariance of the Bombieri--Weyl probability distribution, one shows that $\rho_{n,d}$ is constant; it is therefore sufficient to evaluate its value at the point $[e_0]=[1, 0, \ldots, 0]$. Working with the coordinates $y_i:=x_i/x_0$ on the open set $\{x_0\neq 0\}\simeq \R^n$, and using again the $O(n+1)$--invariance, we can assume that $d_{[e_0]}g=\lambda d_{[e_0]}y_1$ for some $\lambda\neq 0$. 

Denote now by $\tilde{p}:\R^n\to \R$ the function $\tilde{p}(y_1, \ldots, y_n):=p(1, y_1, \ldots, y_n)$ and notice that near zero we have an expansion:
\be\label{eq:expansion}\tilde{p}( y_1, \ldots, y_n)=\xi_{0}+d^{\frac{1}{2}}\sum_{j=1}^n \xi_j y_j+\left(\frac{d(d-1)}{2}\right)^{\frac{1}{2}}\left(\sum_{j=1}^n\xi_{jj}y_j^2+\sqrt{2}\sum_{i\neq j}\xi_{ij}y_iy_j\right)+O(\|y\|^3)\ee
(we have relabeled the gaussian variables). 
In particular one can compute exactly the Kac--Rice density at zero, using the recipe we gave above:
\be \label{eq:krexp}\rho_{n, d}\equiv d^{\frac{n}{2}}\cdot \int_{\mathrm{Sym}(n-1, \R)}|\det(Q)|\,\mathrm{\gamma_{\mathrm{GOE}}(\mathrm{d}Q})+O\left(d^{\frac{n}{2}-1}\right),\ee
where $\gamma_{\mathrm{GOE}}$ is the GOE measure on the space of symmetric matrices (see \cref{example:GOE}). The appearance of the random matrix model is due to the fact that the quadratic part of \eqref{eq:expansion}, up to a factor, is a Bombieri--Weyl quadratic form, whose corresponding symmetric matrix is a GOE matrix. Similar expansions can be proved if one only looks at critical points of a given index $k$, integrating over the set of symmetric matrices with signature $(k, n-1-k)$. The upper bound in \eqref{eq:bettiaverage} now follows from the finiteness of the integral $\int_{\mathrm{Sym}(n-1, \R)}|\det(Q)|\,\mathrm{\gamma_{\mathrm{GOE}}(\mathrm{d}Q})$. 
%

\subsubsection{Lower bounds and rescaling limits}\label{sec:sqrt} Before moving to the proof of the lower bound in \eqref{eq:bettiaverage} we will discuss a  powerful idea, which goes back to the work F. Nazarov and M. Sodin \cite{NazarovSodin1, NazarovSodin2}, and suggests to look at the limiting behaviour of a random polynomial on a small disk, by rescaling the variable in an appropriate way, as we have done in \eqref{eq:expansion}.

Given a homogeneous polynomial $p\in \R[x_0, \ldots, x_n]_{(d)}$, it will be convenient for the rest of the discussion to look at it as a function\footnote{If $d=2\ell$ is even we can eve regard $p$ as a function on $\RP^n$. In fact the function
$$[x_0, \ldots, x_n]\mapsto \frac{p(x)}{(x_0^2+\cdots +x_n^2)^\ell}$$
is well defined (since $x_0^2+\cdots +x_n^2$ has no zeroes on $\RP^n$) and has the same zero set as $p$.} on $S^n$ (by restriction), rather then a section of $\mathcal{O}(d)$ on $\RP^n$: the zero set of $p$ on $S^n$ is a double cover of the zero set of $p$ in $\RP^n$ and one can easily recover the structure of one from the other. Moreover, if we are looking at the local behaviour of $p$, it makes no difference to regard it as a section of $\mathcal{O}(d)$ or as a function on $S^n$, as we can always compose it with the local inverse of the covering map.

Let now $e_0=(1, 0, \ldots, 0)\in S^n$ (or any other point, it doesn't matter for the discussion, by the orthogonal invariance of the Bombieri--Weyl measure) and consider the unit disk $D\subset \R^n$ and the family of embeddings $\eta_d:D\to S^n$ given by
$$\eta_d(y):=\frac{e_0+\frac{2}{\sqrt{d}}y}{\|e_0+\frac{2}{\sqrt{d}}y\|}=e_0+\frac{y}{\sqrt{d}}+O\left(\frac{\|y\|^2}{d}\right).$$
 We denote by $D_d\subset S^n$ the image of $\eta_d$: notice that this is a Riemannian disk in the sphere with center $e_0$ and radius $\sin(2d^{-1/2}).$ Given a polynomial $p\in \R[x_0, \ldots, x_n]_{(d)}$, consider the map $f_d:D\to \R$ defined by the diagram
 $$\begin{tikzcd}
D \arrow[r, "\eta_d"] \arrow[rr, "f_d"', bend right] & D_d \arrow[r, "p"] & \R
\end{tikzcd}.$$ 
Since $\eta_d$ is a diffeomorphism onto its image, the pairs $(D, Z(f_d))$ and $(D_d, Z(p))$ are diffeomorphic. If $p$ is a random polynomial, then $f_d$ is a random variable with values in $C^{\infty}(D, \R)$. Does this random variable exhibit some special properties in the large $d$ limit? The next result expresses a crucial property of the Bombieri--Weyl distribution.
\begin{proposition}\label{propo:limitBW}Let $U\subset C^{\infty}(D, \R)$ be a nonempty open set for the $C^k$--topology such that $\PP\{f_d\in \partial U\}=0$. Then there exists $c(U)>0$ such that
$$\lim_{d\to \infty}\PP\{f_d\in U\}=c(U).$$
\end{proposition}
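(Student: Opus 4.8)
The plan is to show that the rescaled random function $f_d$ converges in distribution, as $d\to\infty$, to a limiting Gaussian field $f_\infty\in C^\infty(D,\R)$, and then to read off the statement from the portmanteau theorem. Since $f_d$ is a centered Gaussian field (it is a Gaussian linear combination of the monomials composed with $\eta_d$), its law is entirely determined by its covariance kernel $K_d(x,y):=\EE[f_d(x)f_d(y)]$. The first step is therefore to compute $K_d$ explicitly. Using the $O(n+1)$-invariance of the Bombieri--Weyl distribution, the covariance of the values of $p$ on $S^n$ depends only on the inner product of the two points, and in fact the reproducing kernel of the Bombieri--Weyl structure evaluated at unit vectors $u,v\in S^n$ equals $\langle u,v\rangle^d$ (this is the content of the monomial identity behind \eqref{eq:BWbasis}). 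Hence
\be K_d(x,y)=\left\langle \eta_d(x),\eta_d(y)\right\rangle^d. \ee
Plugging in the expansion $\eta_d(x)=e_0+\tfrac{x}{\sqrt d}+O(\|x\|^2/d)$ gives $\langle \eta_d(x),\eta_d(y)\rangle = 1 - \tfrac{\|x-y\|^2}{2d}+O(d^{-3/2})$ uniformly for $x,y\in D$, so that $K_d(x,y)\to e^{-\|x-y\|^2/2}=:K_\infty(x,y)$, the covariance of a rescaled Bargmann--Fock (Gaussian entire) field. One should check that this convergence holds together with all derivatives in $x$ and $y$ — which is routine since everything is analytic and the convergence is uniform on the compact $\bar D$ — so that it upgrades to convergence of the $C^k$-valued Gaussian laws.

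Next I would invoke the standard fact that, for Gaussian fields with smooth covariances, convergence of the covariance kernels in $C^k(\bar D\times\bar D)$ implies convergence in distribution of the fields as random elements of $C^{k-1}(D,\R)$ (tightness comes from Kolmogorov-type moment bounds, which are uniform because the kernels and their derivatives are uniformly bounded; convergence of finite-dimensional distributions is immediate from convergence of covariances of jets). This realizes $f_d\Rightarrow f_\infty$ in the $C^k$-topology, where $f_\infty$ is the centered Gaussian field with covariance $K_\infty$ — a fixed, nondegenerate object independent of $d$.

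Finally, apply the portmanteau theorem: if $U$ is open in the $C^k$-topology with $\PP\{f_\infty\in\partial U\}=0$, then $\PP\{f_d\in U\}\to \PP\{f_\infty\in U\}=:c(U)$. Since $U$ is nonempty and open and the limiting field has full support in $C^k$ (a consequence of nondegeneracy of $K_\infty$, e.g. because the associated reproducing kernel Hilbert space is dense), we get $c(U)>0$. The one hypothesis that needs a word is the boundary condition: the statement assumes $\PP\{f_d\in\partial U\}=0$ for the prelimit fields, but what the portmanteau theorem requires is $\PP\{f_\infty\in\partial U\}=0$; these are reconciled by noting that for the natural open sets $U$ arising in applications (determined by finitely many derivative inequalities at finitely many points, or by $C^0$-smallness on a subdomain) the boundary is a ``codimension one'' event for any nondegenerate smooth Gaussian field, hence null for both $f_d$ and $f_\infty$ simultaneously. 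The main obstacle is precisely the soft-analysis bookkeeping of step two — getting from kernel convergence to genuine weak convergence of $C^k$-valued random variables with the right tightness — rather than any of the explicit computations, all of which are short.
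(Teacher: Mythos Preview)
Your approach is correct in spirit and is \emph{genuinely different} from what the paper does. The paper does not actually prove the full statement here: it only sketches the weaker inequality $\liminf_{d\to\infty}\PP\{f_d\in U\}>0$, by an explicit hands--on argument. Namely, it expands $f_d$ as a power series in $y$ with Gaussian coefficients, splits $f_d=f_{\leq d_1}+f_{>d_1}$, uses Markov's inequality to make the tail $f_{>d_1}$ $C^k$--small with positive probability, and then uses that the finite--dimensional head $f_{\leq d_1}$ lands near a fixed polynomial $q\in U$ with positive probability; independence of the two pieces finishes the job. No covariance kernels, no weak convergence, no portmanteau. For the existence of the limit the paper simply cites \cite{LerarioStecconi}.

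Your route --- compute $K_d(x,y)=\langle\eta_d(x),\eta_d(y)\rangle^d\to e^{-c\|x-y\|^2}$ in $C^\infty$, upgrade to weak convergence $f_d\Rightarrow f_\infty$ in $C^k$, invoke portmanteau and full support --- is exactly the argument behind that citation, and it buys you the full statement (existence of the limit) rather than just the $\liminf$. Conversely, the paper's barrier argument is self--contained and avoids the ``soft--analysis bookkeeping'' you flag as the main obstacle. A small constant: with the paper's $\eta_d$ one gets $K_\infty(x,y)=e^{-2\|x-y\|^2}$ rather than $e^{-\|x-y\|^2/2}$, but this is immaterial.

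One point you correctly isolate but do not fully resolve: the hypothesis in the statement is $\PP\{f_d\in\partial U\}=0$, while portmanteau needs $\PP\{f_\infty\in\partial U\}=0$. From weak convergence alone you get $\PP\{f_\infty\in\partial U\}\leq \liminf_d\PP\{f_d\in\partial U\}$ only for \emph{open} sets, not closed ones like $\partial U$, so you cannot directly transfer the null--boundary condition from $f_d$ to $f_\infty$. Your workaround (``for the natural $U$'s both vanish'') is honest but is not a proof of the proposition as stated; in practice the cleaner formulation puts the hypothesis on the limit field, and the paper's phrasing is arguably slightly imprecise here.
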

I will sketch the proof of the weaker statement $ \liminf_{d\to \infty}\PP\{f_d\in U\}>0$, which does not require the assumption $\PP\{f_d\in \partial U\}=0$. This will suffice for the purpose of proving the lower bound in \cref{thm:GaWe}; the existence of the limit is more delicate and it is proved in \cite[Theorem 23]{LerarioStecconi}.
\begin{proof}
The crucial observation is the expansion (I am omitting the big--Oh term for simplicity):
\begin{align}f_d(y)&=\sum_{k=0}^d\sum_{|\beta|=k}\left(\frac{d!}{(d-k)!\beta_1!\cdots \beta_n!}\right)^{\frac{1}{2}}\xi_{k\beta}\left(\frac{y_1}{\sqrt{d}}\right)^{\beta_1}\cdots\left(\frac{y_n}{\sqrt{d}}\right)^{\beta_n}\\
\label{eq:sum}&=\sum_{k=0}^d\sum_{|\beta|=k}\left(\frac{d!}{(d-k)!d^k}\right)^{\frac{1}{2}}\xi_{k\beta}\left(\frac{1}{\beta_1!\cdots \beta_n!}\right)^{\frac{1}{2}}y_1^{\beta_1}\cdots y_n^{\beta_n}.
\end{align}
Notice that, $\left(\frac{d!}{(d-k)!d^k}\right)^{\frac{1}{2}}\to 1$ as $d\to \infty$. Therefore from this expansion we see that, once $d_1\in \mathbb{N}$ is fixed, the probability distribution induced on the coefficients of monomials of degree at most $d_1$ converges to a fixed nondegenerate gaussian distribution. The proof just aims at proving that given the open set $U$ we can find $d_1=d_1(U)>0$ such that only the coefficients of monomials of degree at most $d_1$ matters for the probability of $U$ and the tail coefficients (those of degree at least $d_1+1$) act just as a little perturbations.

Let us make things a little more rigorous. Pick $g\in U.$ Since $U$ is open in the $C^k$--topology, there exists $\delta>0$ such that  $B_{C^k(D, \R)}(g, \delta)\subseteq U$. Since polynomials are dense in the $C^k$--topology on the disk (by Weierstrass' approximation Theorem), there exists a polynomial $q$ such that $\|q-g\|_{C^k(D,\R)}<\frac{\delta}{2}.$ Let $d_0=\deg(q)$ and for $d\geq d_1\geq d_0$ split the sum \eqref{eq:sum} as 
$$f_d=\sum_{k=0}^{d_1}(\cdots)+\sum_{k=d_1+1}^d(\cdots)=f_{\leq d_1}+f_{>d_1}.$$
Observe that for every $\alpha=(\alpha_1, \ldots, \alpha_n)$ with $|\alpha|\leq r$
\be\label{eq:seriesd}\EE \max_{y\in D}\left|\partial_{y_1}^{\alpha_1}\cdots\partial_{y_n}^{\alpha_n}f_{>d_1}(y)\right|\leq \sum_{k=d_1+1}^\infty\sum_{|\beta|=k}\sqrt{\frac{2}{\pi}}\left(\frac{d!}{(d-k)!d^k}\right)^{\frac{1}{2}}\left(\frac{1}{\beta_1!\cdots \beta_n!}\right)^{\frac{1}{2}}\alpha_1!\cdots \alpha_n!\ee
Since the series in \eqref{eq:seriesd} is converging, by Markov's inequality, there exists $c_1>0$ and $d_1=d_1(U)>0$ such that
\be\label{eq:E1}\PP\left\{\|f_{>d_1}\|_{C^r(D, \R)}\leq \frac{\delta}{4}\right\}\geq 1- \frac{c_1}{\delta}>0.\ee
Now $d_1$ is fixed and, since all norms on finite dimensional spaces are equivalent, there exists $c_2>0$ such that 
\be \label{eq:E2}\PP\left\{\|f_{\leq d_1}-q\|_{C^r(D, \R)}<\frac{\delta}{4}\right\}\geq c_2>0.\ee
Since the two events in \eqref{eq:E1} and \eqref{eq:E2} are independent (they involve disjoint sets of independend gaussian variables), there is a positive probability that they happen at once. This means that there exists $c_3:=c_1c_2>0$ such that as $d\to \infty$
$$\PP\left\{\|f_d-q\|_{C^r(D, \R)}\leq \|f_{\leq d_1}-q\|_{C^r(D, \R)}+\|f_{>d_1}\|_{C^r(D, \R)}<\frac{\delta}{2}\right\}\geq c_3.$$
This implies that the smooth map $f_d$ belongs to $B(g, \delta)\subseteq U$ with probability at least $c_3>0$ and concludes the proof.
\end{proof}
With \cref{propo:limitBW} available the proof of the lower bound in \eqref{eq:bettiaverage} uses the so called ``barrier method'' introduced in \cite{NazarovSodin1}, and  goes as follows. Pick your favorite compact hypersurface $Z\subset D$ and consider the set 
$$U_{Z}=\{f:D\to \R\,|\, (D, Z(f))\simeq (D, Z)\}\subset C^{\infty}(D, \R).$$
The set $U_{Z}$ is an open set for the $C^1$--topology and by \cref{propo:limitBW}, $\lim \PP\{f_d\in U_{Z}\}=c(U_{Z})>0.$
Put now in the sphere $S^n$ at least $c_3 d^{\frac{n}{2}}>0$ disjoint disks $B_1, \ldots, B_{c_3d^{n/2}}$ of radius $\sin(2d^{-1/2})\approx 2d^{-1/2}$ (we can chose so many disjoint disks by a doubling argument). Then
\begin{align*}\EE b(Z(p))&\geq\EE b(\{\textrm{components of $Z(p)$ entirely contained in $\cup B_j$}\})\\
&=\sum_{j=1}^{c_3 d^{\frac{n}{2}}}\EE b(\{\textrm{components of $Z(p)$ entirely contained in $B_j$}\})\\
&\geq \sum_{j=1}^{c_3 d^{\frac{n}{2}}} b(Z)c(U_{Z})\geq c_{1,n}d^{\frac{n}{2}}.
\end{align*}
This proves the lower bound in \cref{thm:GaWe}.

\subsection{The global structure: low degree approximation}I would like to discuss now the so--called ``low degree approximation theorem'', a results from \cite{DiattaLerario}, showing that most polynomials $p$ of degree $d$ can be approximated by polynomials $q$ of degree $O(\sqrt{d \log d})$ such that $(\RP^n, Z(p))\simeq (\RP^n, Z(q)).$ In other words, the ``arrangement'' of most hypersurfaces of degree $d$ is the same of some hypersurface of degree $O(\sqrt{d\log d}).$ 

In order to state the result, for every $\ell\in \mathbb{N}$ such that $d-\ell\in 2\mathbb{N}$, consider the space $\|x\|^{d-\ell}\cdot P_{n, \ell}$, consisting of polynomials $p\in P_{n,d}$ of the form $p(x)=\|x\|^{d-\ell}q(x)$, for some $q\in P_{n, \ell}$. This is a subspace of $P_{n,d}$ isomorphic to $P_{n, \ell}$. Since $\|x\|^{d-\ell}$ has no real projective zeroes, for what concerns topology, polynomials from this space are polynomials of degree $d$ whose real zero set is given by a polynomial of degree $\ell$. Denote by 
$$\tau_\ell:P_{n,d}\to \|x\|^{d-\ell}\cdot P_{n, \ell}$$
the orthogonal projection\footnote{This is the projection with respect to any scalar product on the space of polynomials which is invariant by orthogonal change of variables. Of course the Bombieri--Weyl scalar product has this property, but there are other scalar products with this invariance, see \cref{sec:projection}.} onto this subspace.

\begin{theorem}[Diatta--Lerario]\label{thm:LDA}Let $p$ be a Bombieri--Weyl polynomial of degree $d$. There exists $c_n>0$ such that for all $d\in \N$ and $\ell\in \mathbb{N}$ such that $d-\ell\in 2\mathbb{N}$, 
\be\label{eq:LDA}\PP\bigg\{(\RP^n, Z(p))\simeq (\RP^n, Z(\tau_{\ell}(p)))\bigg\}\geq 1-c_nd^{c_n}e^{-\frac{\ell^2}{c_n d}}.\ee
\end{theorem}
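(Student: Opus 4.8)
The plan is to compare the topology of the zero set of $p$ with that of its truncation $\tau_\ell(p)$ by a quantitative transversality argument on the sphere $S^n$, exactly in the spirit of the barrier/rescaling ideas of \cref{sec:sqrt}. Write $p = \tau_\ell(p) + r$, where $r$ is the orthogonal complement of $p$ in $P_{n,d}$ with respect to the Bombieri--Weyl scalar product; by the invariance of the decomposition, $\tau_\ell(p)$ (as a function on $S^n$, i.e. $\|x\|^{d-\ell}q(x)$ with $q\in P_{n,\ell}$) and $r$ are \emph{independent} Gaussian fields, the first one being, up to the harmless factor $\|x\|^{d-\ell}$ which does not vanish on $S^n$, a Bombieri--Weyl polynomial of degree $\ell$. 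The key geometric input is a uniform lower bound on how non-degenerate $q$ is near its own zero set: one shows that outside an event of probability at most $c_n d^{c_n}$, at every point $x\in S^n$ with $|q(x)|$ small the gradient $\nabla q(x)$ (in the round metric) is bounded below, so that $Z(q)$ is ``quantitatively transverse'' with some threshold $\delta>0$. The heart of the matter is then the estimate
$$\PP\Big\{\|r\|_{C^1(S^n)} < \delta\Big\} \geq 1 - c_n d^{c_n} e^{-\ell^2/(c_n d)},$$
after which a standard isotopy argument (the linear homotopy $t\mapsto \tau_\ell(p)+tr$ stays transverse, hence gives an ambient isotopy of pairs $(\RP^n, Z(\tau_\ell(p)+tr))$, $t\in[0,1]$) finishes the proof.

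The steps, in order, are as follows. \textbf{(1)} Set up the orthogonal splitting $p=\tau_\ell(p)+r$ and verify independence and the identification of $\tau_\ell(p)$ with a degree-$\ell$ Kostlan field on $S^n$. \textbf{(2)} Prove the ``quantitative transversality'' statement for a random Kostlan polynomial $q$ of degree $\ell$: there is $\delta=\delta(n)>0$ and an event $\Omega_\ell$ with $\PP(\Omega_\ell)\geq 1-c_nd^{c_n}$ on which, on $S^n$, $|q(x)|<\delta \Rightarrow |\nabla q(x)|\geq \delta$. This follows from a Kac--Rice / first-moment bound on the measure of the ``bad'' tube $\{|q|<\delta,\ |\nabla q|<\delta\}$ together with a Gaussian concentration (Borell--TIS) bound on $\|q\|_{C^2}$; the polynomial loss $d^{c_n}$ is absorbed by union bounds over a grid whose mesh is controlled by the $C^2$-norm. \textbf{(3)} Prove the tail bound for $r$. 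Here one uses that $r=\sum_{|\alpha|=d,\ \deg\text{-cut}}\xi_\alpha \sigma_\alpha$ is the part of the Kostlan expansion ``of degree $\geq \ell+1$ in the affine variable'', exactly the $f_{>d_1}$-type tail from the proof of \cref{propo:limitBW}; estimating $\EE\|r\|_{C^1(S^n)}^2$ and recognizing that the surviving coefficients carry weights $\binom{d}{k}/d^k$-type factors that are exponentially small for $k\leq \ell$ when $\ell=O(\sqrt{d})$, one gets $\EE\|r\|_{C^1(S^n)}^2 \leq c_n d^{c_n} e^{-\ell^2/(c_n d)}$, and then Markov (or Gaussian concentration) gives the stated probability bound. \textbf{(4)} On the intersection of the two good events, run the isotopy: since $\|r\|_{C^1}<\delta$ and $q$ is $\delta$-transverse, each $\tau_\ell(p)+tr$ has $0$ as a regular value with uniformly transverse zero set, so Ehresmann/Thom's isotopy lemma gives $(\RP^n,Z(p))\simeq(\RP^n,Z(\tau_\ell(p)))$.

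The main obstacle is \textbf{step (2)}: obtaining a \emph{uniform} lower bound on $|\nabla q|$ near $Z(q)$ with only a polynomial-in-$d$ failure probability. A pointwise Kac--Rice estimate controls the expected volume of the bad tube, but upgrading ``small average'' to ``empty with high probability'' requires discretizing $S^n$ at a scale finer than $\|q\|_{C^2}^{-1}$ and union-bounding; the number of grid points is polynomial in $d$, which is precisely the origin of the $d^{c_n}$ prefactor in \eqref{eq:LDA}, and one must check that the per-point failure probability is small enough (it is, being Gaussian, super-polynomially small) to beat this. A secondary technical point is that $\tau_\ell(p)$ is only a Kostlan field of degree $\ell$ up to the non-vanishing weight $\|x\|^{d-\ell}$, so all $C^1$ comparisons must be carried out for the actual section $\|x\|^{d-\ell}q$ on $S^n$; since $\|x\|=1$ there and the weight is smooth, this only changes constants, but it should be stated carefully. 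Everything else — the orthogonal splitting, the tail computation, the final isotopy — is routine given the machinery already developed in \cref{sec:veronese} and \cref{sec:sqrt}.
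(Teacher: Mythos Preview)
Your overall architecture---split $p=\tau_\ell(p)+r$, show the tail $r$ is $C^1$--small with probability $1-c_nd^{c_n}e^{-\ell^2/(c_nd)}$, show the ``anchor'' is quantitatively transverse, then run a linear isotopy---is exactly the architecture the paper uses. Step (3) is essentially what the paper proves (via the Sobolev inequality $\|r\|_{C^1}\lesssim d^{1/2}\|r\|_{H^q}$ and a weighted concentration estimate), and step (4) is the same Thom isotopy argument. The genuine difference is \emph{which} object you anchor at and \emph{how} you certify transversality: you anchor at $\tau_\ell(p)$ and propose to prove its $\delta$--transversality by a grid/union--bound argument, whereas the paper anchors at the full $p$ and controls $\mathrm{dist}_{\mathrm{BW}}(p,\Sigma_{n,d})$ using Raffalli's exact formula (\cref{thm:raffalli}) together with the B\"urgisser--Cucker--Lotz tube estimate around the discriminant.

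Your step (2), however, does not close as written. First, $\tau_\ell(p)$ is \emph{not} a Bombieri--Weyl polynomial of degree $\ell$: the map $q\mapsto\|x\|^{d-\ell}q$ pulls back the degree--$d$ BW scalar product to an $O(n{+}1)$--invariant scalar product on $P_{n,\ell}$, but not the BW one (the spherical--harmonic weights are $\beta_d(k)$, not $\beta_\ell(k)$). Second, a constant threshold $\delta=\delta(n)$ is not achievable with the stated failure probability: at a fixed point $x$ the vector $(q(x),\nabla q(x))$ is nondegenerate Gaussian, so $\PP\{|q(x)|<\delta,\ |\nabla q(x)|<\delta\}$ is only \emph{polynomially} small in $\delta$, not ``super--polynomially small''; after a union bound over a grid at scale $\|q\|_{C^2}^{-1}$ (polynomially many points) you do not beat a constant $\delta$. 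Any fix forces $\delta$ to be polynomially small in $d$, and at that point what you are really estimating is $\PP\{\mathrm{dist}(\tau_\ell(p),\Sigma_{n,\ell})\le \delta\}$---i.e.\ a tube probability around a discriminant. (Incidentally, the bound ``$\PP(\Omega_\ell)\ge 1-c_nd^{c_n}$'' is vacuous; presumably you meant $1-c_nd^{-c_n}$.)

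This is precisely what the paper's route buys you: instead of building a bespoke transversality estimate for the truncation, it invokes Raffalli's formula to rewrite $\mathrm{dist}_{\mathrm{BW}}(p,\Sigma_{n,d})$ as $\min_{x\in S^n}(|p|^2+d^{-1}\|\nabla^T p\|^2)^{1/2}$ and then the off--the--shelf bound $\PP\{\mathrm{dist}(p,\Sigma)\le t\|p\|\}\le 4eDNt$ with $D,N=O(d^n)$. That gives a clean polynomial--in--$d$ control of the transversality side in one stroke, with no grid, no $C^2$ concentration, and no issue about the exact law of the anchor. If you want to salvage your version, the cleanest fix is to replace your step (2) by the same tube estimate applied to $\tau_\ell(p)$ (or, equivalently, to $p$ as in the paper), rather than a discretization argument.
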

From this statement we see that, if we choose $\ell=b\sqrt{d\log d}$ with $b>0$ large enough, the exponential part in \eqref{eq:LDA} dominates the polynomial part and the above probability goes to one as $d\to \infty$ (with a polynomial rate). In other words, most of the Bombieri--Weyl measure is concentrated near chambers containing polynomials  whose truncation to degree $O(\sqrt{d\log d})$ have zero sets diffeomorphic to the original polynomial. Notice that the majority of the chambers does not have this property, but the chambers that fail to have this property all together have Bombieri--Weyl measure that goes to zero as $d\to \infty$. 

If $\ell=d^b$ with $b\in (\frac{1}{2}, 1)$, the rate of convergence is of the order $e^{-d^{O(1)}}$  and if $\ell=bd$, with $b\in (0,1)$, it is of the order $e^{-O(d)}$. These estimates can be used to give a bound on the probability of special arrangements. The first result in this direction is due to D. Gayet and J.--Y. Welschinger \cite{GayetWelschingerrarefaction}, where the authors prove that the Bombieri--Weyl probability of the set of maximal curves of degree $d$  (i.e. curves with $O(d^2)$ many ovals) decays exponentially fast as $d\to \infty.$ Using \cite{DiattaLerario} one can prove exponential rarefaction of maximal projective hypersurfaces in dimension $n$ and also for more general arrangements (e.g. maximally nested ones).  I let the reader speculate on these statements.

\subsubsection{The decomposition into spherical harmonics}\label{sec:projection}Before moving to the proof \cref{thm:LDA}, let me explain the approximation procedure (the projection $\tau_\ell$ from the previous section) from a different point of view, showing that it can be seen as multidimensional analogue of the truncation of a Fourier expansion. As above, we work with polynomials as functions on the sphere $S^n$; we use the notation $\mathcal{P}_{n,d}:=\R[x_0, \ldots, x_n]_{(d)}|_{S^n}$ to stress this fact. To start with, recall that the space $L^2(S^n)$ can be written as
\be\label{eq:sphericaldeco}L^2(S^n)=\overline{\bigoplus_{k\in \mathbb{N}}V_{n, k}}^{L^2(S^n)},\ee
where, for every $k\in \N$ the space $V_{n, k}$ denotes the space of \emph{spherical harmonics of degree $k$} (i.e. eigenfunctions of the spherical Laplacian with eigenvalues $\lambda_k=-k(n-k+1)$). What is important for us is that $V_{n,k}$ coincides with the set of functions on the sphere which are restrictions of \emph{harmonic} homogenous polynomials of degree $k$ (i.e. homogeneous polynomials of degree $k$ which are in the kernel of the Laplacian on $\R^{n+1}$). When $n=1$, this decomposition is just the standard Fourier decomposition, since $V_{1, k}=\mathrm{span}\{\cos(k\theta), \sin(k\theta)\}.$

If in \eqref{eq:sphericaldeco} we take the sum only over $k\in \{k\leq d,\, \textrm{$d-k$ even}\}$ we get precisely the space of homogeneous polynomials (restricted to the sphere):
\be \label{eq:irr}\mathcal{P}_{n,d}=\bigoplus_{k\leq d,\, d-k \in 2\N}V_{n,k}.\ee
This decomposition is orthogonal with respect to the $L^2(S^n)$ scalar product. Remarkably, it is also orthogonal with respect to the Bombieri--Weyl scalar product, and in fact with respect to \emph{any} $O(n+1)$--invariant scalar product. This a consequence of the fact that \eqref{eq:irr} is precisely the decomposition into irreducible subspaces under the $O(n+1)$--action (this is a classical fact, see \cite[Theorem 4.37]{Lerarionotes} for a detailed proof). 

As a consequence of this fact, given $\ell\leq d$ with $d-\ell\in 2\mathbb{N}$, we see that the projection $\tau_{\ell}$ defined above is nothing but the orthogonal projection
$$\tau_\ell:\mathcal{P}_{n,d}=\bigoplus_{k\leq d,\, d-k \in 2\N}V_{n,k}\to \bigoplus_{k\leq \ell,\, \ell-k \in 2\N}V_{n,k}=\mathcal{P}_{n, ,\ell}.$$

\begin{remark}It is now a good point to explain why the Bombieri--Weyl distribution is not the unique invariant one and how to classify invariant gaussian measures, as done by E. Kostlan in \cite{Kostlan95}. Recall from \cref{remark:gaussianscalar} that there is a one--to--one correspondence between nondegenerate, centered gaussian distributions and scalar products; invariant scalar products give rise to invariant distributions.  Since $V_{n,k}$ is irreducible for the orthogonal change of variables representation, By Schur's Lemma, on each $V_{n,k}$ there is only one invariant scalar product: up to multiples this is the $L^2(S^n)$ one. Moreover, since for $k_1\neq k_2$ the representations $V_{n,k_1}$ and $V_{n, k_2}$ are not isomorphic, every invariant scalar product on $\mathcal{P}_{n,d}$ is of the form:
\be\label{eq:invariantsp} \langle \cdot, \cdot\rangle=\sum_{d-k\in 2\mathbb{N}} \beta(k) \langle  \cdot,\cdot \rangle_{L^2(S^n)}|_{V_{n, k}},\ee
for some collection of ``weights'' $\{\beta(k)>0\}_{d-k\in 2\N}$. It follows that there is a whole family of invariant scalar products on $\R[x_0, \ldots, x_n]_{(d)}$ (and therefore of invariant measures), depending on $\lfloor \frac{d}{2}\rfloor +1$ parameters (the weights).
\end{remark}

\begin{remark}
The reader might wonder if a statement similar to \cref{thm:GaWe} holds for other invariant measures. For purely spherical harmonics, F. Nazarov and M. Sodin \cite{NazarovSodin1} proved that the expectation of the number of connected components of the zero set of a random spherical harmonic of degree $d$ on $S^2$  is $\Theta(d^2)$. This paper contains the ``barrier method'' idea: all the above mentioned results on lower bounds of probabilities are essentially just technical improvements of this idea. Using this idea, together with E. Lundberg \cite{Lerarioshsp} we proved that for the Gaussian measure induced on the space of polynomials of degree $d$ from the $L^2(S^n)$ scalar product, $\EE b_0(Z(p))=\Theta(d^n)$. More generally, together with Y. Fyodorov and E. Lundberg, in \cite{LerarioFLL} we defined the notion of \emph{coherent} distribution, which is essentially a family of invariant measures on the space of polynomials of degree $d$ such that, as $d\to \infty$, the weights \eqref{eq:invariantsp} have some limiting behaviour. More precisely,  a coherent family of probability distributions on the space of polynomials is given by the the choice of weights $\{\beta_{d}(k)\}_{d-k\in 2\N}$ in \eqref{eq:invariantsp} satisfying, as $d\to \infty$,
$$\exists \lambda\in (0,1),\, \exists\psi\in L^2(0, \infty)\quad\textrm{such that}\quad \beta_{d}(xd^{\lambda})d^{\lambda}\to \psi(x).$$
For instance, the Bombieri--Weyl distributions are coherent with parameter $\lambda=\frac{1}{2}$ and $\psi$ is a gaussian function; the $L^2(S^n)$ distributions are coherent with $\lambda=1$ and $\psi$ is the characteristic function of the unit interval. For every $\lambda\in (0,1)$ there are coherent families with parameter $\lambda$.
 For a coherent invariant family with parameter $\lambda\in (0,1)$, we proved that $\EE b_0(Z(p))=\Theta(d^{\lambda n})$ (this result ``interpolates'' between \cite{GaWe1, GaWe2} and \cite{Lerarioshsp}).
\end{remark}
\subsubsection{Proof of \cref{thm:LDA}}
Let us now explain the ideas behind the proof of \cref{thm:LDA}. We start with some differential topology. Given $p\in C^\infty(S^n, \R)$ such that $p=0$ is a regular equation on the sphere, if we take a little perturbation $\tilde{p}$ in the $C^1$--topology, the pair  $(S^n,Z(p))$ and $(S^n, Z(\tilde{p}))$ are diffeomorphic. The fact that, if $p$ is a polynomial, it is possible to estimate how large can this perturbation be, follows from a theorem of C. Raffalli \cite{raffalli}. 
\begin{theorem}[Raffalli]\label{thm:raffalli}For every $p\in P_{n,d}$
\be\label{eq:BWdist}\mathrm{dist}_{\mathrm{BW}}(p, \Sigma_{n,d})=\min_{x\in S^n}\left(|p(x)|^2+\frac{\|\mathrm{proj}_{T_{x}S^n}(\nabla p(x))\|^2}{d}\right)^{\frac{1}{2}}.\ee
\end{theorem}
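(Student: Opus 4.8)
The plan is to exhibit the discriminant as a union of linear subspaces---one for each candidate singular point---and to compute the distance to each of them using the reproducing kernel of the (real) Bombieri--Weyl scalar product, i.e.\ the one for which $\mathcal{B}_{n,d}$ is orthonormal.

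For $x\in S^n$ set $\Sigma_x:=\{q\in P_{n,d}\st q(x)=0,\ \mathrm{proj}_{T_xS^n}(\nabla q(x))=0\}$. Euler's identity $\langle x,\nabla q(x)\rangle=d\,q(x)$ shows that on $S^n$ the vanishing of $q(x)$ already forces $\nabla q(x)\in T_xS^n$, so that $\Sigma_x=\{q\st q(x)=0,\ \nabla q(x)=0\}$ is precisely the set of degree--$d$ polynomials whose zero set is singular at $x$; hence $\Sigma_{n,d}=\bigcup_{x\in S^n}\Sigma_x$. Since each $\Sigma_x$ is a linear subspace,
$$\mathrm{dist}_{\mathrm{BW}}(p,\Sigma_{n,d})=\inf_{x\in S^n}\mathrm{dist}_{\mathrm{BW}}(p,\Sigma_x)=\inf_{x\in S^n}\|\mathrm{proj}_{\Sigma_x^\perp}(p)\|_{\mathrm{BW}},$$
and the infimum is a minimum because $S^n$ is compact and the quantity computed below is continuous in $x$. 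Thus everything reduces to identifying $\Sigma_x^\perp$ and the norm of the orthogonal projection onto it.

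Here I would use the reproducing kernel. By the multinomial theorem, $\sum_{|\alpha|=d}\sigma_\alpha(x)\sigma_\alpha(y)=\langle x,y\rangle^d$, so, setting $E_x(y):=\langle x,y\rangle^d$, the identity $q(x)=\langle q,E_x\rangle_{\mathrm{BW}}$ holds for every $q\in P_{n,d}$; that is, $K(x,y)=\langle x,y\rangle^d$ is the reproducing kernel. Differentiating in the first variable, for $v\in\R^{n+1}$ the polynomial $G_v^x(y):=d\,\langle x,y\rangle^{d-1}\langle v,y\rangle$ represents the functional $q\mapsto\langle\nabla q(x),v\rangle$. Now $\Sigma_x$ is the common kernel of the $n+1$ functionals $q\mapsto q(x)$ and $q\mapsto\langle\nabla q(x),v_i\rangle$, where $\{v_1,\dots,v_n\}$ is an orthonormal basis of $T_xS^n=x^\perp$, so $\Sigma_x^\perp=\Span\{E_x,G_{v_1}^x,\dots,G_{v_n}^x\}$. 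A short computation with the reproducing property gives the Gram matrix of these generators: $\langle E_x,E_x\rangle_{\mathrm{BW}}=\langle x,x\rangle^d=1$; $\langle E_x,G_v^x\rangle_{\mathrm{BW}}=G_v^x(x)=d\langle v,x\rangle=0$ for $v\perp x$; and $\langle G_v^x,G_w^x\rangle_{\mathrm{BW}}=\langle\nabla G_v^x(x),w\rangle=d\langle v,w\rangle$ for $v,w\perp x$ (the extra term produced by the differentiation is proportional to $\langle x,v\rangle\langle x,w\rangle$, hence vanishes). In particular the generators are independent, $\dim\Sigma_x^\perp=n+1$, and $\{E_x\}\cup\{d^{-1/2}G_{v_i}^x\}_{i=1}^{n}$ is an orthonormal basis of $\Sigma_x^\perp$.

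Finally, expanding $p$ against this orthonormal basis and using $\langle p,E_x\rangle_{\mathrm{BW}}=p(x)$ and $\langle p,G_{v_i}^x\rangle_{\mathrm{BW}}=\langle\nabla p(x),v_i\rangle$, we get
$$\|\mathrm{proj}_{\Sigma_x^\perp}(p)\|_{\mathrm{BW}}^2=|p(x)|^2+\frac1d\sum_{i=1}^{n}\langle\nabla p(x),v_i\rangle^2=|p(x)|^2+\frac{\|\mathrm{proj}_{T_xS^n}(\nabla p(x))\|^2}{d},$$
since $\{v_i\}$ is an orthonormal basis of $T_xS^n$; taking the minimum over $x\in S^n$ yields \eqref{eq:BWdist}. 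The computation is essentially forced once the reproducing-kernel picture is in place; the points that deserve care are the reduction $\Sigma_{n,d}=\bigcup_x\Sigma_x$ (where Euler's relation is used to see that at a zero of $q$ the tangential condition is the full singularity condition), the bookkeeping of the factor $d$, which enters solely through $\langle G_v^x,G_w^x\rangle_{\mathrm{BW}}=d\langle v,w\rangle$, i.e.\ through the derivative of the degree--$d$ kernel, and keeping straight the two pictures of $p$---as a homogeneous polynomial on $\R^{n+1}$ and as its restriction to $S^n$---so that $\nabla p(x)$ and $\mathrm{proj}_{T_xS^n}(\nabla p(x))$ consistently denote the ambient and the intrinsic gradients.
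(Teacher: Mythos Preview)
Your proof is correct and complete. The paper does not actually give a proof of this theorem; it only remarks that ``the proof uses the orthogonal invariance of the Bombieri--Weyl structure and the reader can try to figure it out by herself.'' Your reproducing--kernel argument is a clean way to carry this out: the identity $K(x,y)=\langle x,y\rangle^d$ is precisely the analytic expression of the $O(n+1)$--invariance the paper alludes to, and it lets you compute $\Sigma_x^\perp$ and its Gram matrix uniformly in $x$ rather than by first rotating $x$ to $e_0$ and reading off the relevant Bombieri--Weyl coefficients there (which is the more hands--on route the hint suggests). The two approaches are equivalent in content; yours has the advantage of making the origin of the factor $1/d$ transparent via $\langle G_v^x,G_w^x\rangle_{\mathrm{BW}}=d\langle v,w\rangle$.
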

This result generalizes the classical Eckart--Young Theorem, which gives a closed formula for the distance, in the Frobenius norm, between a square matrix  and the ``discriminant'' set of matrices with determinant zero. The proof uses the orthogonal invariance of the Bombieri--Weyl structure and the reader can try to figure it out by herself. 

Using the explicit formula for the distance from the discriminant given by \cref{thm:raffalli}, one sees that, if $\|p-f\|_{C^1(S^n,\R)}\leq \mathrm{dist}_{\mathrm{BW}}(p, \Sigma_{n,d})$, then the equation $(1-t)p+tf=0$ on the sphere is regular for every $t\in  [0,1]$ and, by Thom's Isotopy Lemma (a variation on \cref{thm:disc}) we have $ (S^n, Z(p))\simeq (S^n, Z(f)).$ In order to prove the theorem it would therefore be ``enough'' to prove that the Bombieri--Weyl measure of the set
\be\label{eq:enough}U_{ d, \ell}:=\bigg\{\|p-\tau_{\ell}(p)\|_{C^1(S^n,\R)}\leq \mathrm{dist}_{\mathrm{BW}}(p, \Sigma_{n,d})\bigg\}\ee
is ``large''  (how large depends on the choice of $\ell$, as in the statement), since the zero set of every element $p\in U_{d, \ell}$ and of its truncation $\tau_{\ell}(p)$ are diffeomorphic. 

Unfortunately, it is not easy to work directly with the event in the parentheses of \eqref{eq:enough} and the strategy is to produce instead a sequence of intermediate inequalities
$$\|p-\tau_{\ell}(p)\|_{C^1(S^n,\R)}\underset{(1)}{\leq} (*)\underset{(2)}{\leq}(**)\underset{(3)}{\leq} \mathrm{dist}_{\mathrm{BW}}(p, \Sigma_{n,d}),$$
each of which has some geometric interpretation and holds with some probability. 
 
 The first such inequality (1) replaces the $C^1$--norm with the  $q$--\emph{Sobolev norm}, which is induced by the scalar prodcut
\be\label{eq:sobolev}\langle\cdot, \cdot\rangle_{H^q}:=\sum_{d-k\in 2\mathbb{N}} k^{2q} \langle  \cdot,\cdot \rangle_{L^2(S^n)}|_{V_{n, k}}.\ee
Notice that this is a special case of an invariant scalar product \eqref{eq:invariantsp}. Using the fact that the space of spherical harmonics is a reproducing kernel Hilbert space, one can produce $L^2$ estimates on the derivatives of its elements. Since on each space of spherical harmonics the $H^q$ norm is a multiple of the $L^2$ (as in \eqref{eq:sobolev}), we can use these estimates to show that, with the choice $q=\frac{n+1}{2}$ and for some $a_1>0$,
$$\|p-\tau_{\ell}(p)\|_{C^1(S^n,\R)}\underset{(1)}{\leq} a_1d^{\frac{1}{2}}\|p-\tau_{\ell}(p)\|_{H^q}.$$

The second inequality (2) is a variation of the gaussian concentration inequality, which estimates the probability of events of the form
\be\label{eq:E}\bigg\{\|\mathrm{proj}_E(p)\|\leq t\|p\|\bigg\}\ee
where  $P$ is a gaussian space, $\|\cdot\|$ denotes the norm for the scalar product inducing the gaussian structure and $E$ is some low--codimension subspace. For instance, if $E$ is of codimension one, the gaussian measure concentrates near a neighborhood of $E$ of size $\dim(P)^{-\frac{1}{2}}$. One can even let the dimension of $E$ be a fraction (close to 1) of the dimension of $P$ and still get concentration inequalities, see \cite{Artstein}. Observe now that $\|p-\tau_{\ell}(p)\|_{H^q}=\|\mathrm{proj}_{E_\ell}(p)\|_{H^q}$, where $E_\ell$ is the orthogonal complement (in the Bombieri--Weyl norm) of $\mathcal{P}_{n, \ell}$. In order to proceed we would like instead to estimate the probability of the event
\be\label{eq:El}\bigg\{\|\mathrm{proj}_{E_\ell}(p)\|_{H^q}\leq t\|p\|_{\mathrm{BW}}\bigg\}.\ee
Compare \eqref{eq:E} with \eqref{eq:El}. They look very similar except for two facts: (a) for the regime we are interested in ($\ell$ as close as possible to $\sqrt{d}$) the space $E_\ell$ has \emph{high} codimension in $\mathcal{P}_{n,d}$ and (b) on the left hand side of \eqref{eq:El} we have the $H^q$ norm and not the BW one (which is the one inducing the gaussian distribution). However, the fact that the $H^q$ norm introduces some ``weights'', makes it indeed possible to treat our problem as a concentration problem and to estimate our probability, for some $a_2>0$, as follows:
$$\PP\bigg\{\|\mathrm{proj}_{E_\ell}(p)\|_{H^q}\underset{(2)}{\leq} t\|p\|_{\mathrm{BW}}\bigg\}\geq1-a_2\frac{d^{-\frac{3n}{2}+2}\ell^{2q+n-2}e^{-\frac{\ell^2}{d}}}{t^2}.$$

The last inequality (3) has to deal with estimating the volume (i.e. the probability) of a small neighborhood of $\Sigma_{n,d}$ in $\mathcal{P}_{n,d}$. This fits into a general interesting problem: given a hypersurfaces $\Sigma$ in a gaussian space $P$, with $\Sigma$ contained in an algebraic set $Z$,  can one estimate the probability of the ``neighborhood'' 
$$\bigg\{p\in P\,\bigg|\,\mathrm{dist}(p, \Sigma)\leq t\|p\|\bigg\}$$ 
as a function of $t>0$, the degree of $Z$ and the dimension of $P$? The answer is given by the following result \cite{BCL} (the proof uses again ideas related to integral geometry). 
\begin{proposition}[B\"urgisser--Cucker--Lotz]Let $P$ be a gaussian space of dimension $N$, $Z\subset P$ be a hypersurface of degree $D$ and $\Sigma\subseteq Z$. For all $t\leq \frac{1}{(2D+1)N} $
\be\label{eq:distanceD}\PP\bigg\{p\in P\,\bigg|\,\mathrm{dist}(p, \Sigma)\leq t\|p\|\bigg\}\leq 4e DNt .\ee
\end{proposition}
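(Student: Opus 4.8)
The plan is to reduce to an estimate for the Gaussian measure of a tube around $Z$, transfer the question to the unit sphere, and there bound the volume of a tube around a real algebraic hypersurface by integral geometry. Since $\Sigma\subseteq Z$ we have $\mathrm{dist}(p,\Sigma)\ge\mathrm{dist}(p,Z)$, so $\{\mathrm{dist}(p,\Sigma)\le t\|p\|\}\subseteq\{\mathrm{dist}(p,Z)\le t\|p\|\}$ and it suffices to bound the probability of the latter. In the situation at hand — and in the application, where $Z$ is the zero set of the homogeneous discriminant — $Z$ is a cone, so the event is invariant under positive scaling, and by the polar form of the standard Gaussian recorded in \eqref{eq:gaussian} its probability equals the normalized spherical measure of $\{\theta\in S^{N-1}:\mathrm{dist}(\theta,Z)\le t\}$. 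Writing $W:=Z\cap S^{N-1}$, for a cone one has $\mathrm{dist}_{\R^N}(\theta,Z)=\sin\mathrm{dist}_{S^{N-1}}(\theta,W)$, so this set is exactly the intrinsic tube $\mathcal T_t:=\{\theta\in S^{N-1}:\mathrm{dist}_{S^{N-1}}(\theta,W)\le\arcsin t\}$; here $W$ is a real algebraic hypersurface of $S^{N-1}$ whose intersection with a generic great circle has at most $2D$ points, since the restriction of a degree-$D$ homogeneous form to a great circle is a trigonometric polynomial of degree $\le D$. (For a general affine $Z$ one conditions on $\|p\|=r$ and runs the same tube estimate uniformly in $r$, at the price of a messier trade-off between the width of the tube and the size of $W$.)

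For the tube estimate set $m=N-1$. First I would bound $\mathrm{vol}(W)$: the spherical analogue of \cref{thm:IGF} (equivalently, \cref{thm:IGF} on $\RP^m$ lifted to $S^m$), applied with $A=W$ and $B=gS^1$ a random great circle, gives $\mathrm{vol}(W)\le c_1 D\,\mathrm{vol}(S^{m-1})$ from the bound $\#(W\cap gS^1)\le 2D$. Then I would apply the coarea formula to the $1$-Lipschitz function $\theta\mapsto\mathrm{dist}_{S^m}(\theta,W)$: for almost every $\tau$, away from the medial axis of $W$ and the image of $\mathrm{Sing}(W)$, the level set $\{\mathrm{dist}=\tau\}$ is a double cover of part of $W$ displaced by $\tau$, so $\mathrm{vol}(\{\mathrm{dist}=\tau\})\le(1+O(\tau))\cdot 2\,\mathrm{vol}(W)$; the contribution of the points within distance $\tau$ of $\mathrm{Sing}(W)$ is of lower order, since $\mathrm{Sing}(W)$ has codimension $\ge 2$ and is itself algebraic (cut out by $f$ and $\nabla f$), so its volume is controlled by the same \cref{thm:IGF} estimate. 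Integrating over $\tau\in[0,\arcsin t]$ yields $\mathrm{vol}(\mathcal T_t)\le(1+o(1))\cdot 2t\,\mathrm{vol}(W)$, and using $\mathrm{vol}(S^{m-1})/\mathrm{vol}(S^m)=\sqrt{m/(2\pi)}(1+o(1))$ one gets $\mathrm{vol}(\mathcal T_t)/\mathrm{vol}(S^m)=O(D\sqrt N\,t)$, which in the range $t\le\tfrac1{(2D+1)N}$ sits comfortably below $4eDNt$; chasing the constants through the integral-geometry formula and the volume ratio produces the stated factor $4eDN$ (which is in any case far from sharp).

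The soft parts — the inclusion of events, the polar decomposition, the coarea identity, and the Bézout bound $\mathrm{vol}(W)\le c_1 D\,\mathrm{vol}(S^{m-1})$ — are routine. The real work is the step $\mathrm{vol}(\{\mathrm{dist}=\tau\})\le 2\,\mathrm{vol}(W)+O(\tau^2)$: this is a smooth (Weyl-type) tube formula that genuinely breaks down near $\mathrm{Sing}(W)$ and along the cut locus, and recovering it for hypersurfaces of arbitrary degree requires stratifying $W$ into smooth pieces, bounding the degree — hence, via \cref{thm:IGF}, the volume — of each stratum and of $\mathrm{Sing}(W)$, and checking that every stratum of codimension $\ge 2$ contributes only $O(\tau^2)$ to the level sets. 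Running the estimate instead one great circle at a time, by bounding how often a generic great circle enters and leaves $\mathcal T_t$, meets the same obstacle in another guise, since $\partial\mathcal T_t$ is only semialgebraic and a clean degree bound for it is precisely the delicate point; this is the sense in which, as noted in the text, the proof uses ideas related to integral geometry.
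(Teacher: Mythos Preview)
The paper does not actually prove this proposition: it is quoted from \cite{BCL} with only the parenthetical remark that ``the proof uses again ideas related to integral geometry.'' So there is nothing in the text to compare your argument against beyond that hint.

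Your outline is precisely in that spirit and is essentially the skeleton of the B\"urgisser--Cucker--Lotz argument: pass to the sphere via the conic structure (implicit in the statement and satisfied in the application, as you note), bound $\mathrm{vol}(W)$ with the Crofton-type estimate $\#(W\cap gS^1)\le 2D$, and then control the tube. You are also right that the last step is where the substance lies, and you flag it honestly. One caution: your first-order tube estimate $\mathrm{vol}(\mathcal T_t)\lesssim 2t\,\mathrm{vol}(W)$ would yield $O(D\sqrt N\,t)$, which is \emph{stronger} than the stated $4eDNt$. Whether that sharpening is genuinely available, or is an artifact of dropping the higher-order curvature terms in the tube polynomial --- which for a degree-$D$ hypersurface in $S^{N-1}$ carry their own $D$- and $N$-dependence --- is exactly what the hard step must settle; the range restriction $t\le\frac{1}{(2D+1)N}$ in the statement is there precisely to keep those higher terms dominated by the linear one. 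So your sketch is on the right track and matches the cited approach, but the passage you label ``the real work'' is the entire content of the result rather than a technicality, and your proposal does not yet supply it.
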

Using the fact that $\Sigma_{n,d}$ is contained in an algebraic set, i.e. the intersection of the space of real polynomials (which has dimension $N=O(d^n)$) with the complex discriminant (which has degree $D=O(d^n)$), in our case \eqref{eq:distanceD} gives $a_3, a_4>0$ such that for all $t\leq a_4d^{-2n}$,
$$\PP\bigg\{t\|p\|_{\mathrm{BW}}\underset{(3)}{\leq}\mathrm{dist}_{\mathrm{BW}}(p, \Sigma_{n,d})\bigg\}\geq1-a_3td^{2n}.$$
Collecting now back the inequalities (1), (2), (3) and tuning the parameter $t$, gives 
$$\PP\bigg\{\|p-\tau_{\ell}(p)\|_{C^1(S^n,\R)}\leq \mathrm{dist}_{\mathrm{BW}}(p, \Sigma_{n,d})\bigg\}\geq1-c_nd^{c_n}e^{-\frac{\ell^2}{c_nd}}.$$
This proves \cref{thm:LDA}.

 \begin{remark}M. Ancona \cite{Anconararefaction} has generalized the previous proof from \cite{DiattaLerario} and proved exponential rarefaction of maximal hypersurfaces in real algebraic varieties. The hypersurfaces are given by zero sets of sections of an ample real Hermitian holomorphic line bundle, and the measure is the same considered by \cite{GaWe1}.  \end{remark}

\bibliographystyle{alpha}
\bibliography{literature}

\newcommand{\noop}[1]{}
\begin{thebibliography}{AEMBM21}

\bibitem[AADL18]{AADL}
D.~Armentano, J-M. Aza\"{\i}s, F.~Dalmao, and J.~R. Le\'{o}n.
\newblock Asymptotic variance of the number of real roots of random polynomial
  systems.
\newblock {\em Proc. Amer. Math. Soc.}, 146(12):5437--5449, 2018.

\bibitem[AADL21]{AADL2}
D.~Armentano, J.-M. Aza\"{\i}s, F.~Dalmao, and J.~R. Le\'{o}n.
\newblock Central limit theorem for the number of real roots of {K}ostlan
  {S}hub {S}male random polynomial systems.
\newblock {\em Amer. J. Math.}, 143(4):1011--1042, 2021.

\bibitem[AADL22]{AADL3}
Diego Armentano, Jean-Marc Aza\"{\i}s, Federico Dalmao, and Jos\'{e}~R.
  Le\'{o}n.
\newblock Central limit theorem for the volume of the zero set of
  {K}ostlan-{S}hub-{S}male random polynomial systems.
\newblock {\em J. Complexity}, 72:Paper No. 101668, 22, 2022.

\bibitem[ADL16]{ADL}
Jean-Marc Aza\"{\i}s, Federico Dalmao, and Jos\'{e}~R. Le\'{o}n.
\newblock C{LT} for the zeros of classical random trigonometric polynomials.
\newblock {\em Ann. Inst. Henri Poincar\'{e} Probab. Stat.}, 52(2):804--820,
  2016.

\bibitem[AEMBM21]{AEMBM}
Rida Ait El~Manssour, Mara Belotti, and Chiara Meroni.
\newblock Real lines on random cubic surfaces.
\newblock {\em Arnold Math. J.}, 7(4):541--559, 2021.

\bibitem[AL21]{AnconaLetendre}
Michele Ancona and Thomas Letendre.
\newblock Roots of {K}ostlan polynomials: moments, strong law of large numbers
  and central limit theorem.
\newblock {\em Ann. H. Lebesgue}, 4:1659--1703, 2021.

\bibitem[Anc24]{Anconararefaction}
Michele Ancona.
\newblock Exponential rarefaction of maximal real algebraic hypersurfaces.
\newblock {\em J. Eur. Math. Soc. (JEMS)}, 26(4):1423--1444, 2024.

\bibitem[Art02]{Artstein}
Shiri Artstein.
\newblock Proportional concentration phenomena on the sphere.
\newblock {\em Israel J. Math.}, 132:337--358, 2002.

\bibitem[AT07]{AdlerTaylor}
R.~J. Adler and J.~E. Taylor.
\newblock {\em Random fields and geometry}.
\newblock Springer Monographs in Mathematics. Springer, New York, 2007.

\bibitem[Ati82]{Atyiah1}
M.~F. Atiyah.
\newblock Convexity and commuting {H}amiltonians.
\newblock {\em Bull. London Math. Soc.}, 14(1):1--15, 1982.

\bibitem[B\"07]{Bu}
Peter B\"urgisser.
\newblock Average {E}uler characteristic of random real algebraic varieties.
\newblock {\em C. R. Math. Acad. Sci. Paris}, 345(9):507--512, 2007.

\bibitem[BBLM22]{BBLM}
Paul Breiding, Peter B\"{u}rgisser, Antonio Lerario, and L\'{e}o Mathis.
\newblock The zonoid algebra, generalized mixed volumes, and random
  determinants.
\newblock {\em Adv. Math.}, 402:Paper No. 108361, 57, 2022.

\bibitem[BCL08]{BCL}
Peter B\"{u}rgisser, Felipe Cucker, and Martin Lotz.
\newblock The probability that a slightly perturbed numerical analysis problem
  is difficult.
\newblock {\em Math. Comp.}, 77(263):1559--1583, 2008.

\bibitem[BCR98]{BCR:98}
Jacek Bochnak, Michel Coste, and Marie-Fran{\c{c}}oise Roy.
\newblock {\em Real algebraic geometry}, volume~36 of {\em Ergebnisse der
  Mathematik und ihrer Grenzgebiete (3)}.
\newblock Springer-Verlag, Berlin, 1998.

\bibitem[BL20]{PSC}
Peter B\"{u}rgisser and Antonio Lerario.
\newblock Probabilistic {S}chubert calculus.
\newblock {\em J. Reine Angew. Math.}, 760:1--58, 2020.

\bibitem[BLLP19]{BLLP}
Saugata Basu, Antonio Lerario, Erik Lundberg, and Chris Peterson.
\newblock Random fields and the enumerative geometry of lines on real and
  complex hypersurfaces.
\newblock {\em Math. Ann.}, 374(3-4):1773--1810, 2019.

\bibitem[Dal15]{Dalmao}
Federico Dalmao.
\newblock Asymptotic variance and {CLT} for the number of zeros of {K}ostlan
  {S}hub {S}male random polynomials.
\newblock {\em C. R. Math. Acad. Sci. Paris}, 353(12):1141--1145, 2015.

\bibitem[DL22]{DiattaLerario}
Daouda~Niang Diatta and Antonio Lerario.
\newblock Low-degree approximation of random polynomials.
\newblock {\em Found. Comput. Math.}, 22(1):77--97, 2022.

\bibitem[Ehr51]{Ehresmann}
Charles Ehresmann.
\newblock Les connexions infinit\'{e}simales dans un espace fibr\'{e}
  diff\'{e}rentiable.
\newblock In {\em Colloque de topologie (espaces fibr\'{e}s), {B}ruxelles,
  1950,}, pages 29--55. ,, 1951.

\bibitem[EK95]{EdelmanKostlan95}
Alan Edelman and Eric Kostlan.
\newblock How many zeros of a random polynomial are real?
\newblock {\em Bull. Amer. Math. Soc. (N.S.)}, 32(1):1--37, 1995.

\bibitem[FK13]{finkh}
S.~Finashin and V.~Kharlamov.
\newblock Abundance of real lines on real projective hypersurfaces.
\newblock {\em Int. Math. Res. Not. IMRN}, (16):3639--3646, 2013.

\bibitem[FLL15]{LerarioFLL}
Yan~V. Fyodorov, Antonio Lerario, and Erik Lundberg.
\newblock On the number of connected components of random algebraic
  hypersurfaces.
\newblock {\em J. Geom. Phys.}, 95:1--20, 2015.

\bibitem[GW11]{GayetWelschingerrarefaction}
Damien Gayet and Jean-Yves Welschinger.
\newblock Exponential rarefaction of real curves with many components.
\newblock {\em Publ. Math. Inst. Hautes \'{E}tudes Sci.}, (113):69--96, 2011.

\bibitem[GW14a]{GaWe1}
D.~Gayet and J.-Y. Welschinger.
\newblock Lower estimates for the expected {B}etti numbers of random real
  hypersurfaces.
\newblock {\em J. Lond. Math. Soc.}, 90:105--120, 2014.

\bibitem[GW14b]{GaWe0}
Damien Gayet and Jean-Yves Welschinger.
\newblock What is the total {B}etti number of a random real hypersurface?
\newblock {\em J. Reine Angew. Math.}, 689:137--168, 2014.

\bibitem[GW16]{GaWe2}
Damien Gayet and Jean-Yves Welschinger.
\newblock Betti numbers of random real hypersurfaces and determinants of random
  symmetric matrices.
\newblock {\em J. Eur. Math. Soc. (JEMS)}, 18(4):733--772, 2016.

\bibitem[Hat02]{Hatcher}
Allen Hatcher.
\newblock {\em Algebraic topology}.
\newblock Cambridge University Press, Cambridge, 2002.

\bibitem[Hir94]{Hirsch}
Morris~W. Hirsch.
\newblock {\em Differential topology}, volume~33 of {\em Graduate Texts in
  Mathematics}.
\newblock Springer-Verlag, New York, 1994.
\newblock Corrected reprint of the 1976 original.

\bibitem[How93]{Howard}
R.~Howard.
\newblock The kinematic formula in {R}iemannian homogeneous spaces.
\newblock {\em Mem. Amer. Math. Soc.}, 106(509):vi+69, 1993.

\bibitem[Kac43]{kac43}
M.~Kac.
\newblock On the average number of real roots of a random algebraic equation.
\newblock {\em Bull. Amer. Math. Soc.}, 49:314--320, 1943.

\bibitem[KL97]{KratzLeon}
Marie~F. Kratz and Jos\'{e}~R. Le\'{o}n.
\newblock Hermite polynomial expansion for non-smooth functionals of stationary
  {G}aussian processes: crossings and extremes.
\newblock {\em Stochastic Process. Appl.}, 66(2):237--252, 1997.

\bibitem[Kos93]{Kostlan95}
E.~Kostlan.
\newblock On the distribution of roots of random polynomials.
\newblock In {\em From {T}opology to {C}omputation: {P}roceedings of the
  {S}malefest ({B}erkeley, {CA}, 1990)}, pages 419--431. Springer, New York,
  1993.

\bibitem[Ler22]{Lerarionotes}
Antonio Lerario.
\newblock Lecture notes on metric algebraic geometry, 2022.

\bibitem[Let19]{Letendre}
Thomas Letendre.
\newblock Variance of the volume of random real algebraic submanifolds.
\newblock {\em Trans. Amer. Math. Soc.}, 371(6):4129--4192, 2019.

\bibitem[LL15]{Lerarioshsp}
Antonio Lerario and Erik Lundberg.
\newblock Statistics on {H}ilbert's 16th problem.
\newblock {\em Int. Math. Res. Not. IMRN}, (12):4293--4321, 2015.

\bibitem[LM20]{PSC2}
Antonio Lerario and L{\'e}o Mathis.
\newblock Probabilistic schubert calculus: Asymptotics.
\newblock {\em Arnold Mathematical Journal}, 2020.

\bibitem[LS21]{LerarioStecconi}
Antonio Lerario and Michele Stecconi.
\newblock Maximal and typical topology of real polynomial singularities.
\newblock {\em Ann. Inst. Fourier}, 2021.

\bibitem[Mat]{Leothesis}
Leo Mathis.
\newblock {T}he handbook of zonoid calculus --- hdl.handle.net.
\newblock \url{https://hdl.handle.net/20.500.11767/129410}.
\newblock [Accessed 09-Jul-2023].

\bibitem[Mil63]{MilnorMorse}
J.~Milnor.
\newblock {\em Morse theory}.
\newblock Annals of Mathematics Studies, No. 51. Princeton University Press,
  Princeton, N.J., 1963.
\newblock Based on lecture notes by M. Spivak and R. Wells.

\bibitem[Mil64]{Milnor}
J.~Milnor.
\newblock On the betti numbers of real varieties.
\newblock {\em Proc. Amer. Math. Soc.}, 15:275--280, 1964.

\bibitem[NS09]{NazarovSodin1}
Fedor Nazarov and Mikhail Sodin.
\newblock On the number of nodal domains of random spherical harmonics.
\newblock {\em Amer. J. Math.}, 131(5):1337--1357, 2009.

\bibitem[NS16]{NazarovSodin2}
F.~Nazarov and M.~Sodin.
\newblock Asymptotic laws for the spatial distribution and the number of
  connected components of zero sets of {G}aussian random functions.
\newblock {\em Zh. Mat. Fiz. Anal. Geom.}, 12(3):205--278, 2016.

\bibitem[OK00]{OreKha}
S.~Yu. Orevkov and V.~M. Kharlamov.
\newblock Growth order of the number of classes of real plane algebraic curves
  as the degree grows.
\newblock {\em Zap. Nauchn. Sem. S.-Peterburg. Otdel. Mat. Inst. Steklov.
  (POMI)}, 266(Teor. Predst. Din. Sist. Komb. i Algoritm. Metody. 5):218--233,
  339, 2000.

\bibitem[OT14]{OkTel}
Ch. Okonek and A.~Teleman.
\newblock Intrinsic signs and lower bounds in real algebraic geometry.
\newblock {\em J. Reine Angew. Math.}, 688:219--241, 2014.

\bibitem[Pod98]{Po}
S.~S. Podkorytov.
\newblock On the {E}uler characteristic of a random algebraic hypersurface.
\newblock {\em Zap. Nauchn. Sem. S.-Peterburg. Otdel. Mat. Inst. Steklov.
  (POMI)}, 252(Geom. i Topol. 3):224--230, 252--253, 1998.

\bibitem[Raf14]{raffalli}
Christophe Raffalli.
\newblock Distance to the discriminant.
\newblock {\em preprint on arXiv}, 2014.
\newblock https://arxiv.org/abs/1404.7253.

\bibitem[Ric44]{Rice}
S.~O. Rice.
\newblock Mathematical analysis of random noise.
\newblock {\em Bell System Tech. J.}, 23:282--332, 1944.

\bibitem[Sar11]{Sarnak}
P.~Sarnak.
\newblock Letter to {B}. {G}ross and {J}. {H}arris on ovals of random planes
  curve.
\newblock {\em available at
  \url{http://publications.ias.edu/sarnak/section/515}}, 2011.

\bibitem[Sch14]{bible}
Rolf Schneider.
\newblock {\em Convex bodies: the {B}runn-{M}inkowski theory}, volume 151 of
  {\em Encyclopedia of Mathematics and its Applications}.
\newblock Cambridge University Press, Cambridge, expanded edition, 2014.

\bibitem[SS93]{Bez2}
M.~Shub and S.~Smale.
\newblock Complexity of {B}\'ezout's theorem {II}: volumes and probabilities.
\newblock In F.~Eyssette and A.~Galligo, editors, {\em Computational Algebraic
  Geometry}, volume 109 of {\em Progress in Mathematics}, pages 267--285.
  Birkh\"auser, 1993.

\bibitem[tbMWN00]{H16trans}
David~Hilbert (translated~by Mary Winton~Newson).
\newblock Mathematical problems, lecture delivered before the international
  congress of mathematicians at paris in 1900, 1900.

\bibitem[Wil78]{Wilson}
George Wilson.
\newblock Hilbert's sixteenth problem.
\newblock {\em Topology}, 17(1):53--73, 1978.

\end{thebibliography}

\end{document}